\newcommand{\Z}{\mathds{Z}}
\newcommand{\Q}{\mathds{Q}}              
\newcommand{\R}{\mathds{R}}
\newcommand{\CP}{\mathds{C}\mathrm{P}}
\newcommand{\N}{\mathds{N}}
\newcommand{\CH}{\mathds{C}\mathrm{H}}
\newcommand{\C}{\mathds{C}}            
\newcommand{\de}{\partial}          
\newcommand{\K}{K\"{a}hler}
\def\b{\beta}
\def\b1{{\rm id}}
\newtheorem{theor}{Theorem}[section]
\newtheorem{prop}[theor]{Proposition}
\newtheorem{defin}{Definition}
\newtheorem{lem}[theor]{Lemma}
\newtheorem{cor}{Corollary}
\newtheorem{example}{Example}
\newtheorem{rmk}{Remark}
\begin{document}

\title[KRS  induced by infinite dimensional complex space forms]{K\"{a}hler-Ricci solitons induced by infinite dimensional complex space forms}

\author{Andrea Loi}
\address{(Andrea Loi) Dipartimento di Matematica \\
         Universit\`a di Cagliari, Via Ospedale 72, 09124  (Italy)}
         \email{loi@unica.it}

\author{Filippo Salis}
\address{(Filippo Salis) Dipartimento di Scienze Matematiche “G. L. Lagrange”, Politecnico di Torino,
Corso Duca degli Abruzzi 24, 10129 Torino (Italy)}
\email{filippo.salis@polito.it}

\author{Fabio Zuddas}
\address{(Fabio Zuddas) Dipartimento di Matematica \\
         Universit\`a di Cagliari, Via Ospedale 72, 09124  (Italy)}
         \email{ fabio.zuddas@unica.it}

\thanks{
The first and the third authors were supported  by STAGE - Funded by Fondazione di Sardegna and by KASBA- Funded by Regione Autonoma della Sardegna. The second author was supported by PRIN 2017 “Real and Complex Manifolds: topology, geometry and holomorphic dynamics” and MIUR grant “Dipartimenti di 
Eccellenza 2018-2022”. All the three authors were supported by INdAM GNSAGA - Gruppo Nazionale per le Strutture Algebriche, Geometriche e le loro Applicazioni.
}

\subjclass[2000]{53C55, 32Q15, 32T15.} 
\keywords{\K\ \ metric, \K -Ricci solitons; Einstein metrics;  Calabi's diastasis function; complex space forms}

\begin{abstract}
We exhibit  families of  non trivial (i.e. not \K-Einstein) radial  \K-Ricci solitons (KRS), both complete and not complete,  which can be \K\ immersed into   infinite dimensional complex space forms.  This result shows that  the triviality of a KRS induced by a finite dimensional complex space form proved in \cite{LM} does not hold when the ambient space is allowed to be infinite dimensional. Moreover, we show that the radial potential of a   radial  KRS induced by a non-elliptic complex space form is necessarily  defined at the origin.
\end{abstract}
 
\maketitle

\tableofcontents  

\section{Introduction}
The study of those complex manifolds $M$ equipped with a \K-Einstein (KE) metric $g$ induced by a complex space form, namely such that $(M, g)$  can be \K\ immersed\footnote{Throughout the paper the \K\ manifold $M$ is not necessarly  compact (or complete) and the \K\ immersion is not required to be injective  or an embedding.} into
a finite or infinite dimensional complex space form $(S, g)$, is a classical problem in complex differential geometry. 
When the ambient complex space form is assumed to be finite dimensional and of non positive holomorphic sectional curvature M. Umehara \cite{UmearaE} shows that  $(M, g)$ is forced to be totally geodesic and hence is  itself an open subset of  a complex space form. On the other hand a classification of those KE manifolds \K\ immersed into  the  finite dimensional complex projective space is still missing. The general conjecture is that such a KE manifold is  forced to be an open subset of a compact  homogeneous \K\  manifold, i.e. it is acted upon transitively by its group of holomorphic isometries. 
Roughly speaking when the ambient space is finite dimensional one has (locally) a finite number of holomorphic functions describing the \K\ immersion which seems to force the potential of a KE metric  
(which satisfies a  Monge-Ampere equation) to have symmetries, i.e. to be the potential of a homogeneous metric.
Many authors have proved the validity of this conjecture under additional assumptions  
(see, e.g.  \cite{Ch}, \cite{Sm}, \cite{ts}, \cite{Ha}, \cite{hulinlambda}, \cite{MS}).
When the ambient space is infinite dimensional the situation  changes drastically:  there exist continuous families 
of complete not homogeneous  KE metrics  projectively induced by  
an infinite dimensional  complex projective space\footnote{We still do not know if  similar phenomena can also happen  in the infinite dimensional non elliptic case.} 
(see \cite{LZ} and \cite{HY}).

Therefore it is natural to  impose some extra conditions on the KE metric $g$  
in order to recover the loss of symmetries due to the  infinite dimensional assumption of the ambient space.
One natural  condition is  to require that the metric is radial,  i.e.  $g$ admits a global K\"ahler potential  $f(r)$ which depends only on the sum $r= |z|^2 = |z_1|^2 + \cdots + |z_n|^2$ of the local coordinates' moduli.
Notice that since the manifold $M$ is assumed to be connected the potential  $f(r)$ is defined on an open interval 
$(r_{\inf}, r_{\sup})$, $0<r_{\inf}<r_{\sup}$. 
The prototypes of radial KE  metrics  are of course the finite dimensional  complex space forms
 and  any homogeneous \K\ manifold with a radial potential is indeed a complex space form. 
The main  result in this regard  found by the authors of the present paper 
can be summarized as follows (see Definition \ref{WB} in the next section  and Definition \ref{defst} in Section \ref{projindKRS}   for the notions   of well-behaved or $c$-stable projectively induced metrics).

\vskip 0.3cm
\noindent
{\bf Theorem A.} (see \cite[Theorem 1.3 and Theorem 1.4]{LSZext})
{\em Let $g$ be a radial KE metric  on a complex manifold $M$ and assume that $(M, g)$ can be \K\ immersed into 
an infinite dimensional complex space form $(S^{\infty}, g^{\infty}_c)$  with constant holomorphic sectional curvature $c$.

(1) If  $c\leq 0$ then $(M, g)$
is a complex space form.

(2) If $c>0$  and the metric  $g$ is either  well-behaved or $c$-stable projectively induced 
then $(M, g)$ is a complex space form.}

\vskip 0.3cm

We believe that the assumptions  that $g$ is well-behaved or $c$-stable projectively induced in Theorem A are  superfluous (cfr. \cite[Conjecture 2]{LSZext}). 
This is true if the Einstein constant of $g$ vanishes: indeed in 
 \cite{LOIZEZU} we prove that   a projectively induced  Ricci flat metric is forced to be flat.
It is also  worth mentioning that the KE condition in Theorem A can be weakened  to  constant scalar curvature (cscK) case
\cite[Theorem 1.3]{LSZext} but not to the case of Calabi's extremal metrics, see \cite[Example 1]{LSZext}.
Both cscK and extremal metrics are generalization of KE metrics.
Another natural extension is that of  {\em Kahler-Ricci soliton} (KRS);
therefore  it is natural to study  radial KRS 
 induced by infinite dimensional complex space forms.
 This is what we do in the present paper.
 Recall that a KRS on a complex manifold $M$ is a pair $(g, X)$ consisting of a Kähler metric $g$ and a
holomorphic vector field $X$, called the {\em solitonic vector field}, such that 
\begin{equation}\label{eqkrsg}
\rho=\lambda \omega+L_{X}\omega
\end{equation}
for some $\lambda \in \mathbb{R}$, called the {\em solitonic constant}.
Here $\omega$ and $\rho$ are respectively the \K\ form and  the Ricci form of the metric $g$ 
and $L_X\omega$ denotes the Lie derivative of $\omega$ with respect to $X$.
KRS are  special solutions of the \K-Ricci flow 
and they generalize \K--Einstein (KE) metrics. Indeed any
KE metric $g$ on a complex manifold $M$ gives rise to a
trivial KRS by choosing $X = 0$ or $X$ Killing with
respect to $g$. Obviously if the automorphism group of $M$ is
discrete then a KRS $(g, X)$ is nothing but a
KE metric $g$. The reader is referred to \cite{KOISO, CAO, WAZHU, POSPIRO, tiansol1, tiansol2}
for more information on KRS.

It turns out that a  radial KRS with given solitonic constant $\lambda$ on a $n$-dimensional complex manifold  $M$
is  uniquely determined by  $(\mu, \nu, k)\in\R^3$  if $n=1$ and $(\mu, \nu)\in\R^2$ if $n\geq 2$ 
(cfr.  Proposition \ref{mainprop} in the next section).  
Further the KRS is not trivial if $\mu$ and $\nu$ are not zero.

The following theorem is the first main   result of the paper.
\begin{theor}\label{mainteor}
Let $(g, X)$ be a non trivial  KRS with solitonic constant $\lambda$ on an $n$-dimensional complex manifold $M$.
Assume that\footnote{It turns out that the  condition \eqref{condpar} 
is equivalent to the fact that  the \K\  potential  $f(r)$ of the metric $g$ is defined at the origin, namely at  $r_{\inf}=0$
(see Proposition \ref{mainprop} below).}
\begin{equation}\label{condpar}
\nu = \frac{n!(\mu-\lambda)}{\mu^{n+1}}.
\end{equation}
Then the following facts hold true.
\begin{itemize}
\item [(a)]
if $\mu$ and $\nu$ are strictly positive and $k=0$ then $(M, g)$ can be \K\ immersed into any  infinite dimensional
complex space forms of non-negative holomorphic sectional curvature.
\item [(b)]
if $\lambda\leq 0$,  $\mu=n+1$ and $k=0$ then $(M, g)$ can be \K\ immersed into any  infinite dimensional complex space form.  
\end{itemize}
\end{theor}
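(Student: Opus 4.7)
The plan is to combine Calabi's classical diastasis criterion with the explicit form of the radial K\"ahler potential $f(r)$ provided by Proposition~\ref{mainprop}. By that proposition together with the regularity condition~\eqref{condpar}, $f$ is analytic at the origin, so $f(r)=\sum_{j\ge 1}a_j r^j$ is a convergent power series whose coefficients can be read off recursively from the radial KRS ODE in terms of $(\lambda,\mu,\nu,n)$. Calabi's criterion in its radial form asserts that $g$ admits a K\"ahler immersion into the infinite dimensional complex space form $S^\infty_c$ of holomorphic sectional curvature $c$ if and only if
\[
\Phi_c(r):=\begin{cases}f(r),&c=0,\\[2pt](e^{cf(r)}-1)/c,&c\ne 0,\end{cases}
\]
admits a power series expansion in $r$ with non-negative coefficients, since then the multinomial identity $(|z_1|^2+\cdots+|z_n|^2)^j=\sum_{|I|=j}(j!/I!)|z^I|^2$ produces an explicit immersion $z\mapsto(\sqrt{b_{|I|}|I|!/I!}\,z^I)_I$ into $S^\infty_c$.

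For part (a), with $\mu,\nu>0$ and $k=0$, I would prove by induction from the recursion of Step~1 that $a_j\ge 0$ for every $j\ge 1$. Since non-negativity of Taylor coefficients is preserved by products and sums, each $f^m$ then has non-negative coefficients and therefore so does $\Phi_c=\sum_{m\ge 1}c^{m-1}f^m/m!$ whenever $c\ge 0$, yielding the desired immersion into every $S^\infty_c$ of non-negative curvature.

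For part (b), note first that the hypotheses $\mu=n+1$, $\lambda\le 0$ together with~\eqref{condpar} force $\nu=n!(n+1-\lambda)/(n+1)^{n+1}>0$, so the argument of part (a) already supplies the immersions for every $c\ge 0$. The genuinely new and delicate content is the case $c<0$: positivity of the $a_j$'s does \emph{not} transfer automatically to the coefficients of $(1-e^{cf})/|c|$; for a generic $f$ with $a_1>0$ the coefficient of $r^2$ is $a_2-|c|a_1^2/2$, which is negative for $|c|$ large. The plan is to exploit the special value $\mu=n+1$ to integrate the radial KRS ODE in essentially closed form, producing an explicit expression for $e^{cf(r)}$ as an elementary analytic function of $F(r)=rf'(r)$, from which the required non-negativity of the Taylor coefficients of $\Phi_c$ for every $c<0$ can be read off directly. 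Carrying out this integration and establishing the sign condition uniformly in $c<0$ is the main obstacle of the proof.
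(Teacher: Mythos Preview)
Your plan for part~(a) is essentially the paper's approach: under~\eqref{condpar} with $\mu,\nu>0$, the paper shows (via an auxiliary induction on the functions $F_k(y)$ with $y^{(k)}(r)=F_k(y)/r^k$) that all derivatives $f^{(k)}(r)$ are non-negative on $[0,r_{\sup})$, which is exactly your claim that all Taylor coefficients $a_j\ge 0$; Calabi's criterion then gives the immersion into $\ell^2(\C)$ and hence into $\CP^\infty$.

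For part~(b), however, your plan has a genuine gap. The hoped-for closed-form integration is not available: the choice $\mu=n+1$ does not make $e^{cf(r)}$ an elementary function of $y=rf'(r)$ for arbitrary $c<0$. The soliton identity only produces $e^{\lambda f}$ in terms of $r$, $y$ and $\psi(y)$ (and even that involves $r^n$), so for $c\neq\lambda$ one would be taking fractional powers of a non-elementary expression. The paper proceeds quite differently. It uses the recursion
\[
Q_1^{-1}(y)=y,\qquad Q_{k+1}^{-1}(y)=-(y+k)Q_k^{-1}(y)+\dot Q_k^{-1}(y)\,\psi(y),
\]
and proves by induction on $k$ that every coefficient in the expansion $Q_k^{-1}(y)=\sum_{j\ge k}a_j y^j$ is non-negative. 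The role of the specific value $\mu=n+1$ is purely combinatorial: it forces $\psi(y)=y+c_2 y^2+O(y^3)$ with $c_2=1-\lambda/(n+1)\ge 1$ (since $\lambda\le 0$) and all higher coefficients positive. In the inductive step the coefficient of $y^m$ in $Q_{k+1}^{-1}$ contains, besides manifestly non-negative terms, the a~priori dangerous contribution $a_{m-1}\bigl[(m-1)c_2-1\bigr]$, which is $\ge 0$ for every $m\ge 2$ precisely because $c_2\ge 1$. This coefficient-by-coefficient sign argument is what actually drives the proof; no closed form is needed, and your proposal does not supply a substitute for it.
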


Theorem \ref{mainteor} shows that 
the  same conclusions of Theorem A (namely the constancy of the holomorphic sectional curvature) cannot be achieved  if one weakens  Einstein's condition  with that of  KRS, even if one requires that  the radial potential  $f(r)$ of the metric $g$ is defined at the origin  (which is stronger than well-behaveness) and that the metric $g$ is induced by any complex space form  (which is much stronger than $c$-stability, see Remark \ref{rmkstable} in Section \ref{projindKRS} below).
The theorem  also shows    that   the main result in \cite{LM} due to the first author and R. Mossa, 
asserting  that a KRS induced by a {\em finite} dimensional (even indefinite) complex space form is trivial,
does not extend to the infinite dimensional setting.

One can show that the KRS  in Theorem \ref{mainteor} are not complete, namely their \K\ metric are not complete.
Thus it is natural to see if there exist complete  KRS induced by  some infinite dimensional complex space form.
The following  theorem, our second main result,  shows that this is indeed  the case.  

\begin{theor}\label{mainteor3}
For any $n\geq 2$ there exist complete and radial 
KRS on $\C^n$ induced by any infinite dimensional complex space forms of non-negative holomorphic sectional curvature\footnote{
We do not know if there exist complete KRS induced by the  infinite dimensional complex hyperbolic space.}.
\end{theor}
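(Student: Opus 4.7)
Proof plan for Theorem~\ref{mainteor3}.

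The overall strategy is to reduce the statement to Theorem~\ref{mainteor}(a), which already supplies a family of radial KRS (with $\mu,\nu>0$, $k=0$, satisfying \eqref{condpar}) each of which can be K\"ahler immersed into every infinite dimensional complex space form of non-negative holomorphic sectional curvature. It therefore suffices to exhibit, within this family, at least one member whose underlying K\"ahler manifold is $(\C^n,g)$ with $g$ complete. By Proposition~\ref{mainprop}, for $n\geq 2$ a radial KRS is determined by the pair $(\mu,\nu)$ together with the ODE satisfied by $F(r):=f'(r)$ obtained by integrating the KRS equation once in $r$ against the Euler-type holomorphic vector field $X=\mu\sum_i z_i\partial_{z_i}$. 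The condition \eqref{condpar} forces $r_{\inf}=0$ and $F(0)=\mu>0$, so that $f(|z|^2)$ is smooth at the origin and defines a genuine K\"ahler metric on a neighbourhood of $0\in\C^n$.

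The main technical step is to show that, for some (in fact for an open range of) $(\mu,\nu)\in(0,\infty)^2$ compatible with \eqref{condpar}, the positive solution $F$ of this ODE extends to all of $[0,\infty)$ with $F>0$ and $F+rF'>0$ everywhere, so that $g$ is well defined and positive definite on the whole of $\C^n$. I would carry this out by exploiting the first integral $-\log\det g=\lambda f+\mu rF+\textrm{const}$ coming from the integrated KRS equation (with $\det g = F^{n-1}(F+rF')$): this allows one to treat $r$ as a function of $F$ via an explicit quadrature and then perform a phase-plane argument to rule out both finite-time blow-up of $F$ and the vanishing of $F+rF'$. The hypothesis $n\geq 2$ enters through the exponent $n-1$ in $\det g$, which controls the nonlinearity of the ODE and, in the $n=1$ case, would demand the extra parameter $k$ that is fixed to zero in our setting.

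Once $F$ is known to be defined on $[0,\infty)$, its asymptotic behaviour as $r\to\infty$ is read off from the same first integral: the dominant balance forces $F\to\infty$ (at least logarithmically, in fact polynomially if $\lambda\leq 0$). Completeness of $g$ on $\C^n$ then reduces to the divergence of the length of a radial ray $t\mapsto(t,0,\ldots,0)$, i.e.\ to the divergence of
\begin{equation*}
\int_0^{\infty}\sqrt{F(t^2)+t^2F'(t^2)}\,dt,
\end{equation*}
and this follows from the lower bound on $F$ together with $F+rF'>0$.

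The KRS $(g,X)$ constructed in this way lies in the family covered by Theorem~\ref{mainteor}(a), so its K\"ahler immersion into any infinite dimensional complex space form of non-negative holomorphic sectional curvature is automatic. The delicate part of the argument is the global existence and positivity step above: the rest of the proof is a routine asymptotic and completeness calculation, and the final invocation of Theorem~\ref{mainteor}(a).
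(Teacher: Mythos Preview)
Your strategy has a genuine gap at the completeness step, and in fact the approach cannot work. The family covered by Theorem~\ref{mainteor}(a) requires $\mu>0$ and $\nu>0$, and the paper shows explicitly (this is the content of the first Example in Section~\ref{compKRS}) that every member of this family is \emph{incomplete}. The reason is visible already in~\eqref{psiesplicita}: with $\mu>0$ and $\nu>0$ the term $\nu e^{\mu y}/y^{n-1}$ dominates $\psi(y)$ for large $y$, so both $\int^{\infty} dy/\psi(y)$ and $\int^{\infty} dy/\sqrt{\psi(y)}$ converge. The first convergence means $t_{\sup}<\infty$, hence $r_{\sup}=e^{t_{\sup}}<\infty$ and the metric is only defined on a finite ball, not on all of $\C^n$; the second is the failure of the completeness criterion~\eqref{propcompl}. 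Your phase-plane argument aimed at showing that $F$ extends to $[0,\infty)$ would therefore fail, and the asymptotic claim ``the dominant balance forces $F\to\infty$ at least logarithmically'' is not compatible with $\mu>0$.

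The paper's proof goes in the opposite direction: it starts from the complete expanding solitons of Example~\ref{filippofamily}, which have $\mu<0$ (and $\lambda=\mu-n-1<0$). For these, $\psi(y)\sim c\,y$ at infinity, giving $r_{\sup}=+\infty$ and completeness on $\C^n$. Since $\mu<0$, Theorem~\ref{mainteor}(a) does not apply, and the immersion into $\ell^2(\C)$ is proved by a separate continuity argument: one writes $\psi(y,\mu)$ as in~\eqref{psinew}, observes that at $\mu=0$ one has $\psi(y,0)=y+y^2$ and $Q_k^0(y,0)=(k-1)!\,y^k$, and then uses continuity in $\mu$ of the $y$-derivatives of $Q_k^0$ to find a nonempty interval $I\subset(-1,0)$ of values of $\mu$ for which all $Q_k^0$ remain nonnegative. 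Lemma~\ref{calcrit} then yields the immersion. So the two halves of the argument --- completeness and inducedness --- are handled by disjoint mechanisms, not by locating a complete member inside the family of Theorem~\ref{mainteor}(a).
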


We believe that the requirement \eqref{condpar} is a necessary condition for the \K\ metric $g$ to be induced by a complex space form, as expressed by  the following:

\vskip 0.3cm

\noindent
{\bf Conjecture 1:}
{\em The potential of a radial KRS induced by an infinite dimensional complex space form is defined at the origin.}

\vskip 0.3cm

In this regard we are able to prove the  following theorem which represents our third and last result.
\begin{theor}\label{mainteor2}
Let $(g, X)$ be a radial KRS on a complex manifold $M$  of complex dimension $n\geq 2$. 
If the metric $g$ is $c$-stable projectively induced, for some $c>0$, then its \K\ potential  $f(r)$ is defined at the origin and hence  \eqref{condpar} holds true.
\end{theor}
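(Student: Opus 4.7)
The plan is to combine the ODE satisfied by $f(r)$ with the algebraic consequences of $c$-stable projective inducedness. Under the radial ansatz $g_{i\bar j}=f'(r)\delta_{ij}+f''(r)z_i\bar z_j$, one has $\log\det(g_{i\bar j})=(n-1)\log f'(r)+\log\bigl(f'(r)+rf''(r)\bigr)$, so $\rho=-i\partial\bar\partial\log\det(g_{i\bar j})$ is explicitly determined by $f$; radial symmetry forces the solitonic vector field $X$ to be a scalar multiple of the Euler-type field $\sum_i z^i\partial_i$, so $L_X\omega$ is likewise governed by a function of $r$ alone. As recorded in Proposition \ref{mainprop}, for $n\geq 2$ the resulting scalar ODE has a two-parameter family of solutions $(\mu,\nu)\in\R^2$, and \eqref{condpar} singles out the subfamily whose potential extends to $r_{\inf}=0$; outside this locus one has $r_{\inf}>0$ and $f$ develops a logarithmic-type singularity as $r\to r_{\inf}^+$.

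Next, I would exploit the $c$-stable projectively induced hypothesis. By Calabi's diastasis rigidity, up to an additive constant $D(r):=f(r)-f(r_0)$ is the diastasis at any base point with $|z|^2=r_0$, and the projective immersion gives $e^{cD(r)}-1=\sum_{j}|\psi_j(z)|^2$ for some holomorphic $\psi_j$ on $M$. Radial symmetry allows each $\psi_j$ to be chosen as a monomial in $z_1,\dots,z_n$, so evaluating on the diagonal $r=|z|^2$ produces a power series in $r-r_0$ whose coefficients are non-negative. The $c$-stability condition of Definition \ref{defst} adds further quantitative constraints on these coefficients and, in particular, controls the radius of convergence of the series.

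Finally, substituting the expansion of $f(r)$ determined by $(\mu,\nu)$ into the identity above yields a recursion for the coefficients $a_k$ of $e^{cD(r)}-1$ in terms of $(\mu,\nu,\lambda,c)$. Consistency with non-negativity and with $c$-stability is expected to force $(\mu,\nu)$ to lie on the curve \eqref{condpar}: when \eqref{condpar} fails, the singularity of $f$ at $r_{\inf}>0$ caps the radius of convergence and induces coefficient growth incompatible with $c$-stability, so stability can hold only if $r_{\inf}=0$. The assumption $n\geq 2$ enters because in the $n=1$ case the extra parameter $k\in\R$ of Proposition \ref{mainprop} affords enough freedom to absorb this obstruction. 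The main obstacle is precisely this last step: translating the algebraic $c$-stability condition into effective sign or growth information on the $a_k$, and verifying that the recursion, modified by the $L_X\omega$-term relative to the KE case treated in Theorem A and \cite{LSZext}, nevertheless forces exactly $\nu=n!(\mu-\lambda)/\mu^{n+1}$.
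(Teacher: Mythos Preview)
Your proposal is a plan rather than a proof, and the acknowledged ``main obstacle'' is in fact the entire content of the argument. More importantly, the mechanism you suggest for exploiting $c$-stability is not the right one. You say stability ``controls the radius of convergence'' of the diastasis expansion and that a singularity of $f$ at $r_{\inf}>0$ would ``induce coefficient growth incompatible with $c$-stability''. But $c$-stability (Definition~\ref{defst}) is a \emph{non-negativity} condition on the coefficients of $e^{c\alpha f}$ for $\alpha$ in an open interval; it says nothing directly about growth rates or radii of convergence, and a finite radius of convergence of the diastasis about some interior point $r_0$ is in no way an obstruction to projective inducedness on $(r_{\inf},r_{\sup})$.

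The paper's argument is short and uses $c$-stability in a completely different way. First one proves (Proposition~\ref{proph}) that \emph{mere} projective inducedness, via the positivity of the $Q_k^{+1}(y)$ of Lemma~\ref{calcrit}, forces arithmetic constraints: setting $h:=y_{\inf}$ one gets $\psi(h)=0$, $h\in\N$, $\dot\psi(h)\in\Z$, and crucially, if $h\neq 0$ then the solitonic constant $\lambda$ must be \emph{rational} (this last follows from $\dot\psi(h)=n-\lambda h$, which is where $n\geq 2$ enters). Now $c$-stability is used only to say that $\beta g$ is still projectively induced for $\beta$ ranging over an interval; since the solitonic constant of $\beta g$ is $\lambda/\beta$, one can choose $\beta$ so that $\lambda/\beta\notin\Q$, and then (iv) of Proposition~\ref{proph} forces $h=y_{\inf}=0$. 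The conclusion $r_{\inf}=0$ and \eqref{condpar} then follows as in the last part of Proposition~\ref{mainprop}. The key idea you are missing is this rationality constraint combined with the scaling freedom afforded by stability; the coefficient-growth heuristic you propose does not do the job.
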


Since  a \K\ metric induced by a  non-elliptic complex space form is $c$-stable for any $c>0$ (see Remark \ref{rmkstable} in Section \ref{projindKRS} below), 
Theorem \ref{mainteor2}
gives the following corollary, which provides us with a partial answer to the conjecture and also  shows that
in order to prove its validity one can restrict to the case when the ambient complex space form
 is the complex projective space.

\begin{cor}
If a radial KRS on a complex manifold of complex dimension $n\geq 2$  is induced by either an infinite dimensional flat space or by an infinite dimensional complex hyperbolic space then its
\K\ potential is defined at the origin.
\end{cor}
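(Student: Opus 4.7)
The plan is to deduce the corollary as a direct consequence of Theorem \ref{mainteor2}, after verifying that its hypothesis ($c$-stability for some $c>0$) is automatic whenever the ambient space is non-elliptic. So the work reduces to citing the appropriate definition and the assertion of Remark \ref{rmkstable}.

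More precisely, I would first observe that the infinite dimensional complex space forms fall into three classes according to the sign of the holomorphic sectional curvature $c$: the infinite dimensional complex projective space ($c>0$, elliptic), the separable complex Hilbert space ($c=0$, flat), and the infinite dimensional complex hyperbolic space ($c<0$). The two cases listed in the corollary, flat and hyperbolic, thus comprise exactly the non-elliptic infinite dimensional complex space forms. By the content of Remark \ref{rmkstable}, a \K\ metric induced by a non-elliptic complex space form is $c$-stable projectively induced for every $c>0$; in particular one may choose any such $c$ and the $c$-stability hypothesis of Theorem \ref{mainteor2} is satisfied.

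Next, since $n\geq 2$ by assumption, Theorem \ref{mainteor2} applies verbatim and yields that the radial potential $f(r)$ of the KRS is defined at the origin, i.e.\ $r_{\inf}=0$, and moreover that the relation \eqref{condpar} between the parameters $\mu$, $\nu$ and the solitonic constant $\lambda$ holds. This is exactly the conclusion of the corollary.

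There is essentially no genuine obstacle in this argument: the corollary is a packaging of Theorem \ref{mainteor2} together with the observation, recorded in Remark \ref{rmkstable}, that $c$-stability comes for free in the non-elliptic regime. The only care needed is to make sure the notion of $c$-stable projectively induced metric (Definition \ref{defst}) is interpreted consistently for $c>0$ even though the ambient space has $c\leq 0$; this is precisely the point addressed by Remark \ref{rmkstable}, which one simply invokes.
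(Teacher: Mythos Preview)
Your proposal is correct and is exactly the argument the paper gives: the corollary is stated immediately after Theorem \ref{mainteor2} with the justification that a \K\ metric induced by a non-elliptic complex space form is $c$-stable projectively induced for every $c>0$ (Remark \ref{rmkstable}), so Theorem \ref{mainteor2} applies directly.
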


The paper is organized as follows. In the next section we  describe the basic facts on radial \K\ metrics and we provide the classification  of radial  KRS (Proposition \ref{mainprop}).
In Section \ref{proofmainteor}, after recalling some necessary and sufficient  conditions  for a radial \K\ metric to be induced by a complex space form (Lemma \ref{calcrit})  we prove  Theorem \ref{mainteor}.
In Section \ref{compKRS} we   show  that the \K\ manifolds  appearing  in  the proof of Theorem \ref{mainteor} are not complete and  we prove Theorem \ref{mainteor3}.
Finally, Section \ref{projindKRS} is dedicated to the proof of  Theorem \ref{mainteor2}.
The paper  ends with an appendix  with two lemmata needed in the proof of Proposition \ref{mainprop}.

\section{Radial KRS}
Let $g$ be a radial  \K\ metric on a connected  complex manifold $M$, equipped with complex coordinates $z_1, \dots ,z_n$ and let $\omega$ and $\rho$ be respectively 
the \K\ form and the Ricci form associated to $g$.
Then  there exists a   smooth radial  function 
$$f: (r_{\inf}, r_{\sup})\rightarrow \R, \ 0\leq r_{\inf}<r_{\sup}\leq\infty,$$
where $(r_{\inf}, r_{\sup})$ is the maximal domain where $f(r)$ is defined 
such that
 \begin{equation}\label{omegar}
 \omega =\frac{i}{2} \partial \bar \partial f(r), \ r=|z|^2=|z_1|^2+\cdots +|z_n|^2,
 \end{equation}
 i.e. $f(r)$ is a radial potential for the metric $g$.

One can easily see that 
the matrix of the metric $g$ and of the Ricci form $\rho$ read  as
\begin{equation}\label{metric}
\omega_{i\bar j}=f'(r)\delta_{ij}+f''(r) \bar z_i z_j.
\end{equation}
\begin{equation}\label{formaricci}
\rho_{i\bar j}= L'(r) \delta_{i j} + L''(r) \bar z_ i z_j,
\end{equation}
where $L(r) = - \log(\det g)(r)$.

Set 
\begin{equation}\label{y(r)}
y(r):=rf'(r).
\end{equation}

\begin{defin}\label{WB}\rm
A radial \K\ metric $g$ is 
{\em well-behaved}  if $y(r)\rightarrow 0$ for $r\rightarrow r_{\inf}^+$. 
\end{defin}
Clearly  if a radial metric $g$ is defined at  $r_{\inf}=0$  then it is well-behaved.
In  particular any metric of constant holomorphic sectional curvature is well-behaved and 
even real analytic on $[0, r_{\sup})$.
Notice that it is not hard to see  that  a radial KE metric defined at the origin  is indeed  a complex space form.
Also set 
\begin{equation}\label{psi(y)}
\psi(r) := ry'(r).
\end{equation}
Then 
\begin{equation}\label{psi(y)2}
\psi(r) = \frac{dy}{dt},\  r=e^t.
\end{equation}

The fact that $g$ is a metric is equivalent to 
$y(r)>0$ and $\psi (r)>0$, $\forall r\in (r_{\inf}, r_{\sup})$.
Then 
\begin{equation}\label{limtr}
\lim_{r\rightarrow r^+_{\inf}}y(r)= y_{\inf}
\end{equation}
is a non negative real number.
Similarly set
\begin{equation}\label{limtrbis}
\lim_{r\rightarrow r^-_{\sup}}y(r)= y_{\sup}\in (0, +\infty].
\end{equation}

Therefore we can invert the map
$$(r_{\inf}, r_{\sup})\rightarrow  (y_{\inf}, y_{\sup}),\  r\mapsto y(r)=rf'(r)$$ 
on $(r_{\inf}, r_{\sup})$ and think $r$ as a function of $y$, i.e. $r=r(y)$.

Hence we can set
\begin{equation}\label{psi}
\psi (y):=\psi (r(y)).
\end{equation}

The following lemma will be used in the proof of Theorem \ref{mainteor2} below.
\begin{lem}\label{lemmalimpsi}
Assume that the function $\psi (y)$ is continuous at $y_{\inf}$.
If $\lim_{y\rightarrow y_{\inf}^+}\psi (y)\neq 0$
then $y_{\inf} =0$.
 \end{lem}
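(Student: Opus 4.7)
The plan is to argue by contradiction: suppose $y_{\inf}>0$, with the aim of extending $f$ smoothly past $r_{\inf}$ and contradicting the maximality of $(r_{\inf},r_{\sup})$.  Set $L:=\lim_{y\to y_{\inf}^+}\psi(y)$; the metric condition $\psi>0$ on $(y_{\inf},y_{\sup})$ together with continuity forces $L\geq 0$, and the hypothesis $L\neq 0$ gives $L>0$.

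The first substantive step is to show that $r_{\inf}>0$.  From \eqref{psi(y)2} with $t=\log r$ one has $dy/dt=\psi(y)$, hence $dt/dy=1/\psi(y)$ and
\begin{equation*}
\log r(y)-\log r(y_0) \;=\; -\int_{y}^{y_0}\frac{d\eta}{\psi(\eta)}.
\end{equation*}
Since $\psi$ is continuous at $y_{\inf}$ with $\psi(y_{\inf})=L>0$, the integrand $1/\psi$ is bounded on $(y_{\inf},y_0]$, so the integral converges as $y\to y_{\inf}^+$.  Thus $\log r(y)$ has a finite limit, giving $r_{\inf}>0$.

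The core step is to extend $f$ smoothly past $r_{\inf}$.  Working in the KRS framework underlying Section 2, I would combine the soliton equation \eqref{eqkrsg} with the Ricci formula \eqref{formaricci} and the identity $\det g=y^{n-1}\psi/r^n$ to derive a first-order linear ODE for $\psi$ as a function of $y$ whose coefficients are smooth on $\{y>0\}$.  Because $y_{\inf}>0$ and $\psi(y_{\inf})=L$ by continuity, this ODE uniquely extends $\psi$ to a smooth solution on a two-sided neighborhood of $y_{\inf}$, with $\psi>0$ there because $L>0$.  Solving $dy/dt=\psi(y)$ backwards from $t_{\inf}=\log r_{\inf}$ extends $y$ smoothly to $(t_{\inf}-\delta,t_{\sup})$ with $y>0$ near $r_{\inf}$, and integrating $f'(r)=y(r)/r$ yields a smooth extension of $f$ to $(r_{\inf}e^{-\delta},r_{\sup})$ for some $\delta>0$, contradicting the maximality of $(r_{\inf},r_{\sup})$.

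The hard part will be the extension step: one must derive the precise linear ODE for $\psi(y)$ from the KRS equation, bootstrap from the one-sided continuity hypothesis at $y_{\inf}$ up to smooth extensibility through that ODE, and then verify that the extended $\psi$, $y$, and $f$ continue to satisfy the metric positivity conditions $y>0$ and $\psi>0$ in a full neighborhood of $r_{\inf}$, so that the extension is a bona fide radial K\"ahler potential incompatible with the assumed maximality.
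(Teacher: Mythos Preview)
Your overall strategy coincides with the paper's: assume $y_{\inf}>0$, show $t_{\inf}=\log r_{\inf}$ is finite, and then extend the solution of $y'(t)=\psi(y(t))$ past $t_{\inf}$ to contradict maximality of $(r_{\inf},r_{\sup})$. Your computation of $t_{\inf}$ via $\int dy/\psi$ is correct and is exactly the contrapositive of the paper's second step (where, from $t_{\inf}=-\infty$ and $y(t)\to y_{\inf}$ finite, one infers $\psi(y(t))=y'(t)\to 0$).

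The misstep is in your ``core step''. The lemma is stated for an arbitrary radial K\"ahler metric under the sole hypothesis that $\psi$ is continuous at $y_{\inf}$; it does \emph{not} assume the KRS equation, and in the paper it is placed \emph{before} Proposition~\ref{mainprop}, where the KRS ODE \eqref{finale9} is first derived. So you are not entitled to ``combine the soliton equation \eqref{eqkrsg} with the Ricci formula \eqref{formaricci}'' to produce a smooth first-order ODE for $\psi$; that machinery is simply unavailable at this point, and your plan to bootstrap smoothness of $\psi$ through the soliton ODE is circular in the logical order of the paper. The extension argument can and should be carried out with nothing more than the continuity of $\psi$ at $y_{\inf}$: extend $\psi$ continuously (e.g.\ by the constant $L>0$) to a left neighbourhood of $y_{\inf}$ and invoke existence for the Cauchy problem $y'(t)=\psi(y(t))$, $y(t_{\inf})=y_{\inf}>0$, noting that $\psi>0$ makes the solution strictly increasing (so $t\mapsto y$ is invertible and uniqueness near $t_{\inf}^+$ is automatic). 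This is exactly what the paper does in one line, and it spares you the entire ``hard part'' you flagged: no KRS ODE, no smoothness bootstrap, no verification of metric positivity beyond the trivial observation that $y>0$ and $\psi>0$ persist on a small two-sided neighbourhood because $y_{\inf}>0$ and $L>0$.
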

 \begin{proof}
Assume by contradiction that  $y_{\inf}\neq 0$.
Note first that $t_{\inf}:=\lim_{r\to r_{\inf}} \log r=-\infty$:
otherwise  (if  $t_{\inf}\in\R$) the function $y(t)$ could be prolonged to an open interval containing $t_{\inf}$
being 
 the solution of   the   Cauchy problem
\begin{equation}
\begin{cases}
 y'(t)=\psi (y(t)) \\
 y(t_{\inf})=y_{\inf}>0.
\end{cases}
\end{equation}
Thus, by  the continuity of $\psi (y)$ at $y_{\inf}\neq 0$,
$$\lim_{y\rightarrow y_{\inf}^+}\psi (y)=\lim_{t\rightarrow -\infty}\psi (y(t))=\lim_{t\rightarrow -\infty} y'(t)=0,$$
where the last equality follows by  \eqref{limtr} when $t_{\inf}=-\infty$,
the desired contradiction.
\end{proof}
 
Finally,  from (\ref{metric}), we easily get 
\begin{equation}\label{detmetric}
(\det g_{i\bar j})(r)=\frac{(y(r))^{n-1}\psi (y)}{r^{n}}.
\end{equation}

The following proposition, which represents a key tool in the proof of our main results,  
provide us with the  explicit expressions of  radial KRS  in terms of the the functions 
$y$ and $\psi (y)$ defined by \eqref{y(r)} and \eqref{psi(y)}.

 \begin{prop}\label{mainprop}
Let $g$ be a radial KRS with solitonic constant $\lambda$.
Then the following facts hold true.

If $n=1$ then there exist  $\mu, k\in\R $ such that

\begin{equation}\label{finalen=1}
\dot\psi(y)= \mu \psi(y) + k+1 -\lambda y
\end{equation}

and if $\mu=0$ then the  soliton is trivial (i.e. a complex space form).
If $\mu\neq 0$ then
\begin{equation}\label{psiespln=12}
\psi(y) = \nu e^{\mu y} + \frac{\lambda}{\mu} y + \left( \frac{\lambda}{\mu^2} - \frac{k + 1}{\mu} \right)
\end{equation}
and  the soliton is trivial iff it is flat iff $\nu=0$.

If $n\geq 2$ then there exists $\mu\in\R$ such that 

\begin{equation}\label{finale9}
\dot\psi(y) = \left( \mu - \frac{n-1}{y} \right) \psi (y) + n - \lambda y 
\end{equation}
and if $\mu=0$ the soliton is trivial  (i.e. KE).
If $\mu\neq 0$ then
\begin{equation}\label{psiesplicita}
 \psi(y) = \frac{\nu e^{\mu y}}{y^{n-1}} + \frac{\lambda}{\mu} y +  \frac{\lambda - \mu}{\mu^{1+n}} \sum_{j=0}^{n-1} \frac{n!}{j!} \mu^j y^{j+1-n}.
\end{equation}
and  the soliton is trivial iff it is flat iff $\nu=0$ and  $\mu =\lambda$.

Moreover, the  KRS is defined at the origin, i.e. at  $r_{\inf}=0$,
iff  $\nu = \frac{n!(\mu-\lambda)}{\mu^{n+1}}$ (namely \eqref{condpar} in Theorem \ref{mainteor} is satisfied).
\end{prop}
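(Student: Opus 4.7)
The plan is to translate the soliton equation $\rho=\lambda\omega+L_X\omega$ into a scalar ODE for $\psi$ regarded as a function of $y$, integrate it, and finally inspect the behaviour at the origin. First I would pin down the admissible $X$: the radial tensor type of $\omega_{i\bar j}=f'(r)\delta_{ij}+f''(r)\bar z_iz_j$ forces $X=-\mu\sum_i z^i\partial_{z^i}$ for some $\mu\in\R$, since any non-linear term in the holomorphic components would generate a remainder in $(L_X\omega)_{i\bar j}$ that cannot be absorbed by the two available radial tensor types. A short computation then gives
\[
(L_X\omega)_{i\bar j}=-\mu\bigl(y'(r)\delta_{ij}+y''(r)\bar z_iz_j\bigr).
\]

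Comparing \eqref{formaricci} with $\lambda\omega+L_X\omega$, for $n\geq 2$ the coefficients of $\delta_{ij}$ and of $\bar z_iz_j$ are matched separately; these two identities are related by $r$-differentiation, so they give a single equation $rL'(r)=\lambda y-\mu\psi$ with no integration constant. For $n=1$ the two tensor types collapse into a single scalar, and the matching reads $(rL')'=(\lambda y-\mu\psi)'$; here one integration is necessary and introduces the arbitrary constant called $-k$. Using \eqref{detmetric} and the change of variable $t=\log r$, the elementary identity
\[
rL'(r)=\frac{dL}{dt}=n-\dot\psi(y)-(n-1)\frac{\psi(y)}{y}
\]
(with the obvious simplification in dimension one) substitutes back to produce exactly \eqref{finale9} and \eqref{finalen=1}. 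If $\mu=0$ then $X\equiv 0$, so $(g,X)$ is a radial Kähler-Einstein metric, which is trivial in the KRS sense; for $n=1$ this additionally gives constant Gaussian curvature, i.e.\ a complex space form.

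For $\mu\neq 0$ the ODE is linear, and I would solve it via the integrating factor $y^{n-1}e^{-\mu y}$ (respectively $e^{-\mu y}$ when $n=1$); iterated integration by parts of $(n-\lambda y)y^{n-1}e^{-\mu y}$ delivers the closed forms \eqref{psiespln=12} and \eqref{psiesplicita}, with $\nu$ as the integration constant. Flatness in this family is characterised by comparing with the elementary $\psi(y)=y$ of the Euclidean metric, which forces $\nu=0$ and $\mu=\lambda$ for $n\geq 2$ (resp.\ $\nu=0$ for $n=1$); in each case the Ricci tensor vanishes and the metric is trivially Kähler-Einstein. For the last assertion, $f$ extends smoothly to $r=0$ iff $y_{\inf}=0$, which forces $\psi$ to have a finite limit as $y\to 0^+$. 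Expanding
\[
\nu\frac{e^{\mu y}}{y^{n-1}}=\nu\sum_{k\geq 0}\frac{\mu^k}{k!}\,y^{k+1-n}
\]
and matching coefficients of $y^{k+1-n}$ for $k=0,\dots,n-2$ against the polynomial tail $\tfrac{\lambda-\mu}{\mu^{n+1}}\sum_{j=0}^{n-1}\tfrac{n!}{j!}\mu^j y^{j+1-n}$, all singular terms cancel simultaneously precisely when $\nu=\tfrac{n!(\mu-\lambda)}{\mu^{n+1}}$, which is \eqref{condpar}. Under this relation a direct check gives $\psi(y)=y+O(y^2)$ near $y=0$, confirming regularity.

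I expect the main technical hurdles to be (i) a clean justification that the above $X$ is essentially the only admissible holomorphic field when $n\geq 2$, together with the slightly more generous classification in $n=1$ that is responsible for the extra parameter $k$, and (ii) the bookkeeping of the iterated integration by parts leading to \eqref{psiesplicita}. Both points are precisely what the two appendix lemmata referenced in the introduction are designed to handle.
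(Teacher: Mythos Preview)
Your overall strategy is close to the paper's, but there are two genuine gaps. First, the claim that the radial tensor structure of $\omega$ \emph{forces} $X=-\mu\sum_i z_i\partial_{z_i}$ is false as stated: if $g$ is flat, for instance, any Killing field gives a KRS, and these are not all of that form. The paper never classifies $X$. Instead it writes the soliton equation as $\sum_k f'(r)(\bar z_kX_k+z_k\bar X_k)=L(r)-\lambda f(r)+F+\bar F$ for some holomorphic $F$, then \emph{averages over $U(n)$}. The appendix lemmata are applied to the averaged objects: Lemma~\ref{lemradial} shows the $U(n)$-average of $F+\bar F$ is $d$ (resp.\ $d+k\log r$ when $n=1$), and Lemma~\ref{lemradial2} shows the averaged right-hand side equals $\alpha r f'(r)$. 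Only then does the scalar constant $\mu=-\alpha$ appear. So the lemmata do not pin down $X$; they pin down the shape of the equation after symmetrisation. Your parenthetical that ``both points are precisely what the two appendix lemmata are designed to handle'' misreads their role.

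Second, for the converse of the last assertion your argument is incomplete. Checking that \eqref{condpar} gives $\psi(y)=y+O(y^2)$ is necessary but does not by itself yield smoothness of $f$ at $r=0$: you still have to pass from the ODE $\frac{dy}{dt}=\psi(y)$ to the statement that $f'(r)=y(r)/r$ extends smoothly across $r=0$. The paper does this via the Hartman--Grobman linearisation theorem, conjugating $\frac{dy}{dt}=\psi(y)$ near $y=0$ to $\frac{dz}{dt}=z$ by a diffeomorphism $y=\Phi(z)$ with $\Phi(0)=0$, whence $y(t)=ce^t\tilde\Phi(ce^t)$ and $f'(r)=c\tilde\Phi(cr)$ is smooth at $0$. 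A separate minor point: the triviality/flatness equivalences (e.g.\ that $\mu=0$ gives a KE metric, or that for $\mu\neq0$ the metric is cscK iff flat) are obtained in the paper by invoking \cite[Lemma~2.1]{LSZext}, not by the bare observation that $X=0$.
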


In order to prove the proposition we need the following two  technical lemmata whose proofs are relegated to Appendix \ref{proofslemma} below.

\begin{lem}\label{lemradial}
Let $G(z)= \Phi (z)+ \bar \Phi (z)$, where $\Phi(z)$, $z \in \C^n$, is a holomorphic function and $G (x_1, \dots ,x_n) = G(|z_1|^2, \dots, |z_n|^2)$ is a rotation invariant function, $x_j=|z_j|^2$. Then 
\begin{equation}\label{forlemmaradial}
G (x_1, \dots ,x_n) = \sum_{j=1}^n c_j \log x_j + d,
\end{equation} 
for some $c_j, d \in \R.$
In particular, if $G(x_1+\cdots +x_n) = G(|z_1|^2 + \cdots + |z_n|^2)$ is radial and $n \geq 2$, then the $c_j$'s must vanish and $G =d$ is constant.
\end{lem}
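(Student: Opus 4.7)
My plan is to exploit the pluriharmonicity of $G = \Phi + \bar\Phi$, which says that every mixed second derivative $\partial_{z_j}\partial_{\bar z_k} G$ vanishes identically, and to convert this condition into a system of ODEs for $G$ viewed as a function of the real variables $x_j = |z_j|^2$. I would carry out the whole analysis on the open Reinhardt set where $x_j > 0$ for every $j$, on which the chain rule and the logarithms are unproblematic.

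First I would apply the chain rule to $G(x_1,\dots,x_n)$ with $x_j = z_j \bar z_j$ to get
\begin{equation*}
\partial_{z_j} G = G_{x_j}\, \bar z_j, \qquad \partial_{z_j}\partial_{\bar z_k} G = G_{x_j x_k}\, z_k \bar z_j + G_{x_j}\, \delta_{jk}.
\end{equation*}
Setting the mixed derivative equal to zero yields two independent consequences. For $j \neq k$, the absence of a Kronecker term forces $G_{x_j x_k}\, z_k \bar z_j \equiv 0$; since $z_k \bar z_j$ is generically nonzero on the working domain, $G_{x_j x_k} \equiv 0$, so each $G_{x_j}$ is a function of the single variable $x_j$. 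For $j = k$, the vanishing reduces to $x_j G_{x_j x_j} + G_{x_j} = 0$, i.e. $(x_j G_{x_j})' = 0$, so $x_j G_{x_j} = c_j$ for some real constant $c_j$. Integrating $G_{x_j} = c_j/x_j$ variable by variable (which is consistent thanks to the separation already established) yields $G(x_1,\dots,x_n) = \sum_{j=1}^n c_j \log x_j + d$, as claimed.

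For the second statement, assume $n \geq 2$ and that $G$ depends only on $r = x_1 + \cdots + x_n$, i.e.\ $G = H(r)$. Then for every index $j$ one has $G_{x_j} = H'(r) = c_j/x_j$. Choosing any two distinct indices $j \neq k$ and comparing gives $c_j/x_j = c_k/x_k$ identically on an open subset where $x_j$ and $x_k$ vary independently; this plainly forces $c_j = c_k = 0$. Hence $H' \equiv 0$ and $G$ is constant.

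The argument is essentially routine once the chain rule identities are set down; the only subtle point is being careful about the domain of definition. I would handle this by restricting the computations to the Reinhardt-type open set $\{x_1 > 0, \dots, x_n > 0\}$ where $\log x_j$ is defined and the coefficient $z_k \bar z_j$ is generically nonvanishing, and then remark that the resulting formula $G = \sum_j c_j \log x_j + d$ is the unique rotation-invariant pluriharmonic extension.
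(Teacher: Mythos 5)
Your proof is correct and follows essentially the same route as the paper: both exploit $\partial_{z_j}\partial_{\bar z_k}G=0$ together with the chain rule identity $\partial_{z_j}\partial_{\bar z_k} G = G_{x_j x_k} z_k \bar z_j + G_{x_j}\delta_{jk}$, use the off-diagonal case to separate variables and the diagonal case to get $(x_j G_{x_j})'=0$, and then integrate. The concluding comparison $c_j/x_j = c_k/x_k$ for the radial case is just a slightly more explicit version of the paper's final remark.
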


\begin{lem}\label{lemradial2}
Let the equality 
\begin{equation}\label{forlemma2}
\sum_{k=1}^n (\bar z_k Y_k(z) + z_k \bar Y_k(z)) = \phi(r)
\end{equation}
hold, where $Y_k$, $k = 1, \dots, n$, is a holomorphic function and $\phi(r)$ is a radial function. Then $\phi(r) = \alpha r$ for some $\alpha \in \R$.
\end{lem}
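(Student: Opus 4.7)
The plan is to compare Taylor expansions in $(z,\bar z)$ of the two sides, observing that the bidegrees of the monomials appearing on the left-hand side and on the right-hand side can coincide only in bidegree $(1,1)$. This will force $\phi$ to be linear in $r$.

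First I would expand $Y_k(z)=\sum_\alpha a_{k,\alpha}z^\alpha$ so that
\[
\sum_k \bar z_k Y_k(z)=\sum_{k,\alpha} a_{k,\alpha}\,z^\alpha\bar z_k,\qquad \sum_k z_k\bar Y_k(z)=\sum_{k,\alpha}\bar a_{k,\alpha}\,z_k\bar z^\alpha.
\]
Every monomial in the first sum has bidegree $(|\alpha|,1)$ in $(z,\bar z)$ and every monomial in the second has bidegree $(1,|\alpha|)$, whereas
\[
\phi(r)=\sum_{m\geq 0}c_m\Bigl(\sum_l z_l\bar z_l\Bigr)^m
\]
is a sum of monomials of balanced bidegree $(m,m)$. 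The sets of bidegrees $\{(|\alpha|,1)\}\cup\{(1,|\alpha|)\}$ and $\{(m,m)\}$ intersect only at $(1,1)$.

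Next I would read off the consequences of this bidegree mismatch. Every monomial of bidegree $(|\alpha|,1)$ with $|\alpha|\neq 1$ on the left has no partner on the right, forcing $a_{k,0}=0$ and $a_{k,\alpha}=0$ for $|\alpha|\geq 2$; the symmetric conclusion follows from the $(1,|\alpha|)$-part. Conversely, every monomial of bidegree $(m,m)$ with $m\neq 1$ on the right has no partner on the left, forcing $c_0=0$ and $c_m=0$ for $m\geq 2$. What survives is the bidegree-$(1,1)$ identity
\[
\sum_{j,k}\bigl(a_{k,e_j}+\bar a_{j,e_k}\bigr)z_j\bar z_k \;=\; c_1\sum_k z_k\bar z_k,
\]
from which $\phi(r)=c_1\,r$, giving the claim with $\alpha:=c_1$.

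The only subtlety, and in my view the main (minor) obstacle, is justifying that a Taylor expansion at the origin is available, since the domain on which the identity holds need not contain $0\in\C^n$. If it does, the argument above is immediate. Otherwise, in dimension $n\geq 2$ Hartogs' extension theorem lets us enlarge the domain to include the origin; for $n=1$ one argues instead by Laurent expansion $Y(z)=\sum_{k\in\Z}a_k z^k$ on the annulus and passes to polar coordinates $z=\rho e^{i\theta}$, $r=\rho^2$, so that the identity becomes $2\sum_k \rho^{k+1}\operatorname{Re}(a_k e^{i(k-1)\theta})=\phi(\rho^2)$; $\theta$-independence of the right-hand side then forces $a_k=0$ for $k\neq 1$, whence $\phi(r)=2\operatorname{Re}(a_1)\,r$.
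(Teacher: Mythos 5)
Your proof is correct, but it takes a genuinely different route from the paper's. The paper differentiates \eqref{forlemma2} with respect to $z_l$ and $\bar z_l$, applies Lemma \ref{lemradial} to conclude $\partial Y_l/\partial z_l=\sum_j c_{lj}\log z_j+d_l$, and then eliminates the logarithmic terms through a chain of functional equations in the variables $x_j=|z_j|^2$; this works directly on an annulus without ever invoking the origin, but it passes through multivalued primitives such as $\int\log z$ in the case $n=1$, and strictly speaking it only concludes that $\phi'$ is constant. Your bidegree comparison is more elementary and transparent, yields the extra information that each $Y_k$ is linear, and gives $\phi(r)=\alpha r$ with no additive-constant ambiguity; the price is needing to expand at the origin, which you correctly discharge via Hartogs for $n\ge 2$ (legitimate here, since the domain is rotation invariant, hence a ball or a full annulus) and via Laurent series for $n=1$. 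Two small points you should spell out: first, in the $n\ge 2$ case, after the Hartogs extension you need the extended right-hand side to remain rotation invariant on the whole ball before writing it as $\sum_m c_m r^m$ --- this follows from real-analyticity and connectedness; second, in the $n=1$ Fourier step the mode $e^{im\theta}$ with $m\neq 0$ receives contributions from both $a_{m+1}$ (weighted by $\rho^{m+2}$) and $\bar a_{1-m}$ (weighted by $\rho^{2-m}$), and it is only because these two powers of $\rho$ differ when $m\neq 0$ that both coefficients must vanish, so the conclusion $a_k=0$ for $k\neq 1$ needs this one extra line.
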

\begin{proof}[Proof of Proposition \ref{mainprop}]
Let $X = \sum_k X_k \frac{\partial}{\partial z_k} + \bar X_k \frac{\partial}{\partial \bar z_k}$ be a real holomorphic vector  field ($X_k=X_k(z)$ are holomorphic functions). 
If we take local complex coordinates $z_1, \dots , z_n$ and use the fact that 
$$L_X \omega (Y, Z) = X(\omega(Y,Z)) - \omega([X, Y], Z) - \omega(Y, [X, Z]).$$
we get
\begin{equation}\label{LXomega1}
(L_X \omega)_{i \bar j} =X(\omega_{i \bar j}) + \frac{\partial X_k}{\partial z_i} \omega_{k \bar j} +  \frac{\partial \bar X_k}{\partial \bar z_j} \omega_{i \bar k}
\end{equation}

By substituting \eqref{metric} in (\ref{LXomega1}) we obtain 
$$(L_X \omega)_{i \bar j}=(\sum_k X_k \bar z_k + \bar X_k z_k) (f''(r) \delta_{i j} + f'''(r) \bar z_ i z_j)  +f''(r)(X_j \bar z_i + \bar X_i z_j)+$$
$$\ \ \ \ \ \ \ \ \ \ \ \ \ \ \ +  \frac{\partial X_k}{\partial z_i} (f'(r)\delta_{k j} + f''(r) \bar z_ k z_j) +\frac{\partial \bar X_k}{\partial \bar z_j} (f'(r) \delta_{i k} + f''(r)\bar z_ i z_k).$$

Now, it is easy to see from a straight calculation that this last expression is equal to 
$$\frac{\partial^2}{\partial z_i \partial \bar z_j} \sum_k f' (r)(\bar z_k X_k + z_k \bar X_k)$$
and then the soliton equation \eqref{eqkrsg} writes
$$
\frac{\partial^2}{\partial z_i \partial \bar z_j} \sum_k f'(r) (\bar z_k X_k + z_k \bar X_k) = \rho_{i \bar j} - \lambda \omega_{i \bar j},
$$
By  taking into account \eqref{metric} and \eqref{formaricci}   one gets 
$$\frac{\partial^2}{\partial z_i \partial \bar z_j} \sum_k f'(r)(\bar z_k X_k + z_k \bar X_k) = a(r) \delta_{i j} + a'(r) \bar z_ i z_j,$$
where we set $a(r) := L'(r) -\lambda f'(r)$.

Let $\Psi:= \sum_k f' (\bar z_k X_k + z_k \bar X_k)$, we have
$$\frac{\partial^2 \Psi}{\partial z_i \partial \bar z_j}= a(r) \delta_{i j} + a'(r) \bar z_ i z_j$$
for every $i$ and $j$.
Thus
$$\frac{\partial^2 \Psi}{\partial z_i \partial \bar z_j}= \frac{\partial^2 A}{\partial z_i \partial \bar z_j}$$
where $A(r) = \int a(r) = L(r) - \lambda f(r) + \gamma$ ($\gamma \in \R$) and then one concludes that
\begin{equation}\label{FFbar}
\Psi = A(r) + F + \bar F
\end{equation}
for some holomorphic function $F$.
Thus we can write
$$-\log\det g-\lambda f+F+\bar F= \sum_kf'(\bar z_k X^k+z_k\bar X^k).$$
By averaging with respect to the action of  the unitary group $U(n)$, we get
\begin{equation}\label{shrexp}
-(\log\det g)(r)-\lambda f(r) + \int_{U(n)}\left(F (Az) +\bar F(Az) \right) dA =\sum_k f'(r) \left(\bar z_k Y^k(z)+z_k \bar Y^k(z)\right), 
\end{equation}
where $Y^k(z)=\int_{U(n)}\bar A_h^k X^h(Az) dA$. 

Equation (\ref{shrexp}) can be rewritten as  $\bar z_k Y^k+z_k \bar Y^k=\phi(r)$ where $\phi(r)$ is radial and the $Y^k$'s are holomorphic. Then Lemma \ref{lemradial2}  above applies and we conclude that $\bar z_k Y^k+z_k \bar Y^k=\alpha r$, so that (\ref{shrexp}) reads as

\begin{equation}\label{shrexpREW}
-(\log\det g)(r)-\lambda f(r) + \int_{U(n)}\left(F (Az) +\bar F(Az) \right) dA = \alpha r f'(r) 
\end{equation}

If $n=1$, then by Lemma \ref{lemradial}   the real part of $ \int_{U(n)}F (Az) dA$ is equal to $h+k \log r$, with $h, k$ constants. In this case, \eqref{shrexpREW} gives
$$-\log \left[f'(r)+rf''(r)\right]-\lambda f(r) + h + k \log r = \alpha r f'(r).$$

By derivating both sides of this equation with respect to $r$ and by multiplying by $r$
$$-\frac{f''(r)+(f''(r) r)'}{f'(r)+f''(r) r}r-\lambda r f'(r)  + k = \alpha (f'(r) r)' r,$$
by  $y(r) = rf'(r)$ and $\psi (r) = r(rf'(r))'$ 
we get
$$\dot\psi(y) = -\alpha \psi(y) + k+1 -\lambda y.$$
Then  \eqref{finalen=1} follows  by setting   $\mu=-\alpha$.
Moreover,  if  $\mu = 0$ equation \eqref{finalen=1} integrates and gives 
$$\psi (y)=-\lambda \frac{y^2}{2}+(k+1)y+c, \ c\in\R,$$
which by \cite[Lemma  2.1]{LSZext}  implies $g$ has constant scalar curvature and hence is  KE.

If $\mu\neq 0$,  one easily  integrates  \eqref{finalen=1}  
and gets \eqref{psiespln=12}. By  \eqref{psiespln=12} and by taking into account \cite[Lemma  2.1]{LSZext} 
we deduce that  $g$ is cscK iff it is flat iff $\nu=0$.

Let us now assume $n \geq 2$.
By applying again Lemma \ref{lemradial} to $\Phi = \int_{U(n)}F (Az) dA$, we get that  the real part of $ \int_{U(n)}F (Az) dA$ is constant and hence \eqref{shrexp} reads as
$$-\log\det g-\lambda f(r) + k =\alpha r f'(r).$$
If we derivate this equation  (with respect to $r$) and multiply both sides by $r$ we get
$$
r[-\log \det(g)]' - \lambda rf'(r) = \alpha r(r f'(r))'
$$
i.e.
$$
r[-\log \det(g)]' - \lambda y = \alpha \psi(y).
$$
Now, by using  \eqref{detmetric} one obtains
$$
-r[\log(y^{n-1}(r) \psi (y))]' + n - \lambda y = \alpha \psi (y).
$$

Since $\frac{d}{dr} = \frac{dy}{dr} \frac{d}{dy} = (rf'(r))' \frac{d}{dy} = \frac{\psi(y)}{r} \frac{d}{dy} $, we can rewrite the previous expression  as

$$\dot\psi(y) = -\left( \alpha + \frac{n-1}{y} \right) \psi (y) + n - \lambda y$$

which gives  \eqref{finale9}  by setting   $\mu=-\alpha$,

By integrating \eqref{finale9} one gets

\begin{equation}\label{psifond}
\psi(y) = \frac{e^{\mu y}}{y^{n-1}} \left[ \nu + \int (n - \lambda y)e^{-\mu y}y^{n-1} dy \right],
\end{equation}
for some constant $\nu\in\R$.

By taking  $\mu = 0$ in  \eqref{psifond} we get
$$\psi (y)=y+\frac{\nu+c}{y^{n-1}}-\frac{\lambda y^2}{n+1},$$
which, together with  \cite[Lemma  2.1]{LSZext}, implies that the metric $g$ is KE (with Einstein constant $2\lambda$).

If  $\mu \neq 0$ then \eqref{psiesplicita} follows, after a long but straight computation, by \eqref{psifond}
and by
$$
\int e^{-\mu y}y^{k} dy = \frac{e^{-\mu y}}{\alpha} y^k - \sum_{j=0}^{k-1} \frac{k!}{(k-j-1)! \mu^{j+2}} e^{-\mu y}y^{k-j-1}.
$$
Finally, by combining (16) with  \cite[Lemma  2.1]{LSZext}  one gets that $g$ is never KE unless it is flat and this happens exactly when  $\nu=0$ and $\mu=\lambda$.

In order to prove the last assertion of the Proposition, first notice that (\ref{psiesplicita}) rewrites as  
\begin{equation}\label{psiesplicita2}
\psi(y) = \frac{\nu e^{\mu y} + \frac{\lambda}{\mu} y^n +  \frac{\lambda - \mu}{\mu^{1+n}} \sum_{j=0}^{n-1} \frac{n!}{j!} \mu^j y^{j}}{y^{n-1}}.
\end{equation} 
If the metric  $g$ is defined at the origin, then both $y(r)=rf'(r)$ and $\psi (r) = r(rf'(r))'$ are  defined and vanish at  $r_{\inf}=0$ and 
\eqref{psiesplicita2} yields  $\nu = n! \frac{\mu - \lambda}{\mu^{1+n}}$.

Conversely, if $\nu = n! \frac{\mu - \lambda}{\mu^{1+n}}$ then (\ref{psiesplicita2}) implies that $\psi(y)=y + O(y^2)$ and  (by the Hartman-Grobman linearisation theorem, see also \cite{FIK}, Section 4.2) the differential equation $\frac{dy}{dt} = \psi(y)$ can be conjugated in a neighbourhood of $y=0$ to the linear equation $\frac{dz}{dt} = z(t)$ by a diffeomorphism $y = \Phi(z)$ satisfying $\Phi(0)=0$. Thus $\Phi(z) = z \tilde \Phi(z)$ for some smooth $\tilde \Phi$  and $y(t) = c e^t \tilde \Phi(c e^t)$. By $e^t = r$ and $y(r) = rf'(r)$ we finally get $f'(r) = c  \tilde \Phi(c r)$ which implies that $f(r)$ is smooth in $r=0$.
\end{proof}

\begin{rmk}\rm
A different proof of equation \eqref{psiesplicita} is obtained by  Feldman-Ilmanen-Knopf \cite[Section 3.2]{FIK}  
when the vector field $X$ is assumed to be gradient
(see also \cite{CAO} for the case of radial steady KRS, i.e. $\lambda=0$).
In fact, it is not hard to see that the vector field $Y$ appearing in the proof of Proposition \ref{mainprop} is a gradient vector field. However, in this paper we include the proof of Proposition \ref{mainprop} for reader's convenience and to make the paper as self contained as possible.
\end{rmk}

\section{The proof of Theorem \ref{mainteor}}\label{proofmainteor}

A finite or infinite dimensional complex space form $(S^{N}, g^{N}_c)$ is a manifold  of constant holomorphic sectional curvature $c$ and complex dimension $N\leq\infty$.
By the word \lq\lq induced'' we mean that the \K\ manifold  $(M, g)$  can be  \K\  immersed  into $(S^{N}, g^{N}_c)$, i.e. there exists a holomorphic map $\varphi :M\rightarrow S^{N}$ such that 
$\varphi^*g^{N}_c=g$ (see \cite{calabi} or the book \cite{LoiZedda-book} for an updated material on the subject).

If one assumes that   $(S^{N}, g^{N}_c)$ is  complete and simply-connected one has the corresponding three cases, depending on the sign of $c$:

- for $c=0$, $S^N=\C^N$ ($S^\infty=\ell^2(\C)$) and  $g^N_0$ is the flat metric with   associated \K\ form
$$
\omega_{0}=\frac{i}{2}\partial\bar\partial |z|^2,\  |z|^2=\sum_{j=1}^N|z_j|^2,\ N\leq\infty;
$$

- for $c<0$, $S^N=\C H^N$ is  the $N$-dimensional complex hyperbolic  space, namely the unit ball of $\C^N$ with the metric $g_c^N$ with associated  \K\ form 
$$
\omega_c=\frac{i}{2c}\partial\bar\partial\log (1-|z|^2);
$$

- for $c>0$, $S^N=\C P^N$ is  the $N$-dimensional complex projective space 
 and $g^N_c$  is the  metric with associated  \K\ form  $\omega_c$, 
 given in homogeneous coordinates by:
$$
\omega_c=\frac{i}{2c}\partial\bar\partial\log (|Z_0|^2 +\cdots +|Z_N|^2).
$$
 Notice that when $c=1$ (resp. $c=-1$) the metric 
$g_c^N$ is the standard Fubini-Study metric $g_{FS}$ (respectively hyperbolic metric $g_{hyp}$) of 
holomorphic sectional curvature $4$ (resp. $-4$).
Throughout the paper we will say  that a metric $g$ on a complex (connected) manifold is {\em projectively induced} if $(M, g)$ admits a \K\ immersion
into $(\CP^{\infty}, g_{FS})$. 

Let $\epsilon\in\{-1,0,1\}$
and define recursively the following function in $y$
\begin{equation}\label{Qk}
Q_1^\epsilon  (y):= y;\qquad Q_{k+1}^\epsilon (y) = (\epsilon y - k)Q_k^\epsilon (y) + {\dot Q}_k^\epsilon (y)\psi (y),
\end{equation}
with $\psi (y)$ given by \eqref{psi}.
\begin{lem}\label{calcrit}
Let $M$ be an $n$-dimensional ($n\geq 1$) complex manifold equipped with a \K\ metric $g$ with radial  \K\ potential  $f(r)$ which is real analytic in $(r_{\inf}, r_{\sup})$.
If  $(M, g)$ can be \K\ immersed into $(S^N, g^N_\epsilon)$ then 
the $Q_k^\epsilon (y)$ are nonnegative for $y\in (y_{\inf}, y_{\sup})$. 
Moreover,  if $r_{\inf}=0$ and $f(r)$ is defined in $[0, r_{\sup})$  the converse holds true.
\end{lem}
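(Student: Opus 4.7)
The lemma is a translation of Calabi's classical immersibility criterion for complex space forms into the variables $(y,\psi)$. I would proceed in two steps: an explicit identification of $Q_k^\epsilon$ with derivatives of a single one-variable function, followed by invocation of Calabi's criterion.

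Set
\[
F_\epsilon(r) \;:=\;
\begin{cases}
f(r), & \epsilon = 0,\\[1mm]
\dfrac{e^{\epsilon f(r)} - 1}{\epsilon}, & \epsilon = \pm 1,
\end{cases}
\qquad G_\epsilon(r) \;:=\; e^{\epsilon f(r)} > 0
\]
(with the convention $G_0 \equiv 1$), so that $F_\epsilon'(r) = f'(r) G_\epsilon(r)$ and $G_\epsilon'(r) = \epsilon f'(r) G_\epsilon(r)$. By induction on $k$ I would prove the identity
\[
r^k \, \frac{d^k F_\epsilon}{dr^k}(r) \;=\; Q_k^\epsilon(y(r))\, G_\epsilon(r), \qquad k \geq 1.
\]
The base case $r F_\epsilon' = rf' G_\epsilon = y G_\epsilon = Q_1^\epsilon G_\epsilon$ is immediate. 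For the inductive step, one differentiates the assumed identity once more, uses $G_\epsilon'/G_\epsilon = \epsilon f'$, and invokes the crucial chain rule
\[
r \, \frac{d}{dr}\bigl[Q_k^\epsilon(y(r))\bigr] \;=\; \dot Q_k^\epsilon(y)\cdot r y'(r) \;=\; \dot Q_k^\epsilon(y)\, \psi(y);
\]
collecting terms produces exactly $r^{k+1} F_\epsilon^{(k+1)} = [(\epsilon y - k) Q_k^\epsilon + \dot Q_k^\epsilon \psi]\, G_\epsilon = Q_{k+1}^\epsilon G_\epsilon$. Since $G_\epsilon > 0$ throughout, the sign conditions $Q_k^\epsilon(y) \geq 0$ on $(y_{\inf}, y_{\sup})$ are equivalent to $F_\epsilon^{(k)}(r) \geq 0$ on $(r_{\inf}, r_{\sup})$ for every $k \geq 1$.

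Then I invoke Calabi's criterion (\cite{calabi}, see also \cite{LoiZedda-book}) in the radial form: $(M, g)$ admits a \K\ immersion into $(S^\infty, g_\epsilon^\infty)$ if and only if $F_\epsilon(|z|^2)$ decomposes as $\sum_\alpha \lambda_\alpha |z^\alpha|^2$ with $\lambda_\alpha \geq 0$. Combined with the $U(n)$-invariance and the multinomial identity $|z|^{2m} = \sum_{|\alpha|=m}\binom{m}{\alpha}|z^\alpha|^2$, such a decomposition is equivalent to the one-variable Taylor expansion $F_\epsilon(r) = \sum_{m\geq 1} c_m r^m$ having $c_m \geq 0$---that is, to the absolute monotonicity $F_\epsilon^{(k)}(r) \geq 0$ for all $k \geq 1$. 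The forward direction now follows immediately: an immersion into $(S^N, g_\epsilon^N) \subseteq (S^\infty, g_\epsilon^\infty)$ provides the decomposition, hence $F_\epsilon^{(k)} \geq 0$, hence $Q_k^\epsilon \geq 0$ on $(y_{\inf}, y_{\sup})$. For the converse, under $r_{\inf}=0$ with $f$ defined on $[0, r_{\sup})$, the non-negativity $F_\epsilon^{(k)} \geq 0$ on $(0, r_{\sup})$ extends by continuity to $r = 0$, producing non-negative Taylor coefficients $c_m = F_\epsilon^{(m)}(0)/m!$ at the origin, and hence the explicit \K\ immersion $z \mapsto (\sqrt{\lambda_\alpha}\, z^\alpha)_\alpha$ into the infinite-dimensional space form of sign $\epsilon$.

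The main obstacle is purely computational---verifying the key identity $r^k F_\epsilon^{(k)} = Q_k^\epsilon G_\epsilon$ requires careful bookkeeping with the chain rule, the critical observation being $r\partial_r = \psi \, d/dy$, which is precisely what makes the recursion defining $Q_{k+1}^\epsilon$ match the differentiation $\partial_r$ applied to $Q_k^\epsilon G_\epsilon / r^k$. The converse direction further relies on the regularity of $f$ at $r = 0$ to pass from pointwise non-negativity of derivatives on the open interval to non-negativity of the Taylor coefficients at the origin.
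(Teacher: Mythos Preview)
Your approach is correct and is essentially the one underlying the cited reference: the paper itself gives no proof of this lemma, deferring to \cite[Lemma 3.2]{LSZ}, and your key identity $r^k F_\epsilon^{(k)}(r) = Q_k^\epsilon(y(r))\,G_\epsilon(r)$ (proved by the induction you outline, via $r\,d/dr = \psi\, d/dy$) is precisely the bridge between Calabi's criterion and the recursion \eqref{Qk}. Note that the paper in fact reproduces the $\epsilon=0$ case of your identity, $Q_k^0(y)=r^k f^{(k)}(r)$, inside the proof of Theorem~\ref{mainteor}, by the same inductive computation.

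One small remark on the forward direction: your phrasing ``equivalent to the one-variable Taylor expansion $F_\epsilon(r)=\sum c_m r^m$ having $c_m\geq 0$'' tacitly expands at $r=0$, which need not lie in the domain when $r_{\inf}>0$. For the necessity statement on $(r_{\inf},r_{\sup})$ you should instead invoke Calabi's diastasis criterion at an arbitrary base point $p\in M$ (equivalently, expand $F_\epsilon$ around an arbitrary $r_0\in(r_{\inf},r_{\sup})$); rotational symmetry then still reduces the positive--semidefiniteness of Calabi's matrices to the single sequence of sign conditions $F_\epsilon^{(k)}(r_0)\geq 0$, i.e.\ $Q_k^\epsilon(y(r_0))\geq 0$. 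This is a cosmetic fix---the substance of your plan is sound.
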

\begin{proof}
See \cite[Lemma 3.2]{LSZ} for a proof.
\end{proof}

\begin{proof}[Proof of Theorem \ref{mainteor}]
By the last part of Proposition \ref{mainprop}
the assumption \eqref{condpar} implies that the potential $f(r)$
is defined at the origin and thus
the metric $g$ is real-analytic on  $[0, r_{\sup})$ (see, e.g.  \cite[Corollary 1.3]{Kot} for a proof). 
We first show that $y(r)$ and all its derivatives w.r.t. $r$ are non-negative 
on  $[0, r_{\sup})$.

We only treat  the case $n\geq 2$ (the case $n=1$ is obtained similarly and it is omitted).
Under the assumption  \eqref{condpar} equation (\ref{psiesplicita}) reads as
\begin{equation}\label{psiesplicita3}
\psi(y) =  y + \sum_{k=1}^{\infty} \frac{\nu \mu^{n+k}}{(n+k)!} y^{k+1}
\end{equation}

Moreover, by $\nu, \mu > 0$ we have $\psi(y) \geq 0$.

We claim  that for every $k$ one has 

\begin{equation}\label{minca}
y^{(k)}(r)  = \frac{F_k(y(r))}{r^{k}}
\end{equation}
with 
\begin{equation}\label{derivateinduzione}
 F_k(y)=O(y^{k}), \ \ F_k(y) \geq 0.
\end{equation}

Formulae \eqref{minca} and \eqref{derivateinduzione} hold true for $k=1$ with $F_1(y) = \psi(y)\geq 0$ since $y'(r) = \frac{dy}{dt} \frac{dt}{dr} =  \frac{y'(t)}{r} = \frac{\psi(y)}{r}$, and $\psi(y) = O(y) \geq 0$ by \eqref{psiesplicita3}. 
Assuming now that  \eqref{minca} and (\ref{derivateinduzione}) are  true for some $k$, we have
$$y^{(k+1)}(r)  = \frac{d}{dr} \left( \frac{F_k(y)}{r^{k}} \right) =  \frac{\frac{dF_k}{dy}y'(r) r^{k} - F_k(y) k r^{k-1}}{r^{2k}} =$$
(recalling that $y'(r) = \frac{\psi(y)}{r}$)
\begin{equation}\label{equationFkinduc}
= \frac{\frac{dF_k}{dy}\psi(y) - k F_k(y)}{r^{k+1}}.
\end{equation}
This shows that (\ref{minca}) holds true also for $k+1$, with
$$F_{k+1}(y) = \frac{dF_k}{dy}\psi(y) - k F_k(y),$$ 
By inserting (\ref{psiesplicita3}) and $F_k(y) = \sum_{j=k}^{\infty} a^k_j y^j$ in this recursion relation it is now easy to see that  $F_{k+1}(y)=O(y^{k+1})$ and $F_{k+1}(y) \geq 0$, which concludes the proof by induction that  \eqref{minca} and \eqref{derivateinduzione} are true for every $k \geq 1$ and then that the derivatives $y^{(k)}(r)$ are non-negative on  $[0, r_{\sup})$ for $k \geq 1$.

By $r f'(r) = y(r) = \sum_{k=1}^{\infty} \frac{y^{(k)}(0)}{k!} r^k$, one immediately deduces that the derivatives $f^{(k)}(r)$ are non-negative for $k \geq 1$. 

Now we claim that the functions $Q^{\epsilon}_k(y)$ defined in (\ref{Qk}) satisfy $Q_{k}^0 (y) = r^k f^{(k)}(r)$ for every $k \geq 1$: this will prove that these functions are non-negative and
by Lemma \ref{calcrit}, $(M, g)$ can be \K\ immersed into $\ell^2(\C)$ and hence also in $\C P^{\infty}$ by a result of Calabi  \cite{calabi} (this proves (a) of Theorem \ref{mainteor}).

In order to prove the claim, notice that (\ref{Qk}) reads

\begin{equation}\label{QkZERO}
Q_1^0  (y):= y;\qquad Q_{k+1}^0 (y) = - k Q_k^0 (y) + {\dot Q}_k^0 (y)\psi (y)
\end{equation}

Since $y = rf'(r)$, we have $Q_{1}^0 (y) = r f^{'}(r)$; assuming now that $Q_{k}^0 (y) = r^k f^{(k)}(r)$ is true for some $k$, we have by (\ref{QkZERO})

$$Q_{k+1}^0 (y) = - k r^k f^{(k)}(r)+ \frac{d}{dr} \left( r^k f^{(k)}(r) \right) \frac{dr}{dy} \psi (y) =$$
(by using $\frac{dr}{dy} = \frac{r}{\psi}$)
$$= - k r^k f^{(k)}(r)+ k r^{k} f^{(k)}(r) + r^{k+1} f^{(k+1)}(r) = r^{k+1} f^{(k+1)}(r) $$

which proves the claim and ends the proof of part (a) of Theorem \ref{mainteor}.

\bigskip

Assume now that the parameters of the   radial KRS $(g, X)$  
satisfy
\begin{equation}\label{condpar2}
\nu = \frac{n!(n+1-\lambda)}{(n+1)^{n+1}},\ \lambda\leq 0,\ \mu=n+1, \ k=0.
\end{equation}
To prove (b) of Theorem \ref{mainteor} we need  to show that the \K\ manifold $(M, g)$ can be \K\ immersed into  
$(\CH^\infty, g_{hyp})$. Indeed this would imply  that it can be \K\ immersed into the infinite dimensional flat space by a result of Bochner \cite{boc} and into any infinite dimensional  complex projective space by  \cite[Lemma 8]{DHL}.

Notice that by (a) the radial potential $f(r)$
of the  metric $g$   of the  family of KRS given by \eqref{condpar2} is real-analytic on $[0, r_{\sup})$. 
Hence, by Lemma \ref{calcrit}, we must prove that $Q_k^{-1}(y)$ is non-negative $\forall k\in\Z^+$ on $[0, y_{\sup}).$
The proof is by induction on $k$.
We treat only the case $n\geq 2$ (the case $n=1$ is treated similarly).
First, let us notice that \eqref{psiesplicita3} under the assumptions \eqref{condpar2}  writes
$$\psi(y)=y+\frac{n!(n+1-\lambda)}{n+1}\sum_{k=1}^\infty\frac{(n+1)^k}{(n+k)!}y^{k+1}=y+\left(1-\frac{\lambda}{n+1}\right)y^2+O(y^3).$$
We now assume by induction that the coefficients in the expansion of $Q_k^{-1}(y)$ are all nonnegative and that it vanishes at $y=0$ with order greater or equal to $k$. This property is clearly verified  for $k=1$, since $Q_1^{-1}(y)=y$ by construction. Then, if $Q_k^{-1}(y)=\sum_{j\geq k} a_jy^j$, by \eqref{Qk}  with $\epsilon =-1$,  we get
\begin{multline*}
Q_{k+1}^{-1}(y)=-(y+k)Q_k^{-1}(y)+{\dot Q}_k^{-1}(y)\psi(y)=\\
-\sum_{j\geq k} a_jy^{j+1}-\sum_{j\geq k} ka_jy^j+\sum_{j\geq k}j a_jy^{j-1}\left(y+\left(1-\frac{\lambda}{n+1}\right)y^2+O(y^3)\right)=\\
\sum_{j\geq k+1}(j-k) a_jy^{j}+\sum_{j\geq k}\left(j-\frac{\lambda j}{n+1}-1\right)a_jy^{j+1}+O(y^{k+2}).
\end{multline*}
and we notice  that all the coefficients of $O(y^3)$ in the second line are positive, so the coefficients in $O(y^{k+2})$ in the last line are all nonnegative. 
\end{proof}
\begin{rmk}\rm
Combining Lemma \ref{calcrit} with the fact that $(M, g)$ cannot be \K\ immersed into $\C H^N$, with $N<\infty$ 
(see \cite{LM} for a proof),   we deduce that 
the number of positive  $Q_k^{-1}(y)$ in the proof of the previous  theorem is forced to be  infinite.
\end{rmk}


\section{Complete KRS solitons and the proof of Theorem \ref{mainteor3}}\label{compKRS}

Let us now investigate the completeness of our radial metrics.

The matrix of a radial metric with radial potential $f(r)$ is given by (see \eqref{metric})

$$
g_{i\bar j}=f'(r)\delta_{ij}+f''(r) \bar z_i z_j
$$
Take now a curve $\gamma(s) = (z(s), 0, \dots, 0)$, $z(s) \in \R$, $z'(s) > 0$, where $s \in (s_1, s_2)$. By definition, its length is given by

$$l(\gamma) = \int_{s_1}^{s_2} \sqrt{g_{1 \bar 1} z'^2(s)} ds = \int_{s_1}^{s_2} \sqrt{f'(z^2(s)) + f''(z^2(s)) z^2(s)} z'(s) ds$$ 

i.e. by the change of variable $z= z(s)$

$$l(\gamma) = \int_{z_1}^{z_2} \sqrt{f'(z^2) + f''(z^2) z^2} dz$$ 

where we have set $z_1 = z(s_1)$, $z_2 = z(s_2)$.

Now, set $r = z^2$ so that $dz = \frac{1}{2 \sqrt r} dr$ and

$$l(\gamma) = \frac{1}{2}  \int_{r_1}^{r_2}  \sqrt{\frac{f'(r) + f''(r) r}{r}} dr = \frac{1}{2}  \int_{r_1}^{r_2}  \sqrt{\frac{(rf')'}{r}} dr $$ 

Now, in order to rewrite this integral in terms of the functions $y(r) = rf'(r)$ and $\psi(y(r)) = r(rf'(r))'$, we make the change of variable $y = y(r)$. Notice that $\frac{dy}{dr} = (rf'(r))' = \frac{\psi f((r))}{r}$, so we get
$$l(\gamma) = \frac{1}{2}  \int_{y_1}^{y_2}  \sqrt{\frac{\psi}{r^2}} \frac{r}{\psi} dy = \frac{1}{2}  \int_{y_1}^{y_2}  \sqrt{\frac{1}{\psi}} dy$$
where $y_1=y(r_1)$ and $y_2=y(r_2)$.

Therefore we deduce that a radial metric corresponding to the function $\psi(y)$ defined on $[0, y_{sup})$ is complete if and only if
\begin{equation}\label{propcompl}
\lim_{y_2 \rightarrow y_{\sup} } \int_{y_1}^{y_2}  \sqrt{\frac{1}{\psi}} dy = + \infty .
\end{equation}

\begin{example}\rm
Let $(g, X)$ be the KRS of the  complex manifolds $M$ of  dimension $n\geq 2$ given by Theorem \ref{mainteor}. 
As we have seen at the beginning of the proof of Theorem \ref{mainteor}, assumption  \eqref{condpar} implies that $\psi$ is given by (\ref{psiesplicita3}).
Then, for every $t_0 \in \R$ and $y_0 > 0$, the function

\begin{equation}\label{Psigrande}
\Psi(y) := \int_{y_0}^y \frac{dy}{\psi(y)}
\end{equation}
(which gives an implicit solution $\Psi(y) = t-t_0$ of the Cauchy problem $\frac{dy}{dt} = \psi(y)$, $y(t_0) = y_0$) is defined for every $y > 0$ since, by (\ref{psiesplicita3}), under the assumption $\nu > 0, \mu > 0$ the denominator $\psi(y)$ of the integrand in (\ref{Psigrande}) vanishes only for $y=0$ and then the integral (\ref{Psigrande}) is finite for every $y > 0$.

Moreover, by (\ref{psiesplicita})  we have
$$\lim_{y \rightarrow + \infty}\int_{y_0}^{+ \infty} \sqrt{ \frac{y^{n-1} dy}{\nu \left[ e^{\mu y} - \sum_{j=0}^{n} \frac{\mu^j}{j!} y^{j} \right] + y^n} }  < +\infty$$
because the integrand goes to zero as $\sqrt{\frac{y^{n-1}}{e^{\mu y}}}$, and the integral $\int_{y_0}^{+ \infty} \sqrt{\frac{y^{n-1}}{e^{\mu y}}} dy$ converges.
Thus,  by \eqref{propcompl},  the metric is not complete.
\end{example}

\begin{example}\rm\label{filippofamily}
Let $(g, X)$ be the KRS of the  complex manifold $M$ of  dimension $n\geq 2$
associated  to $(\nu, \mu)\in\R^2$ satisfying
\begin{equation}\label{condvarie}
\nu = n! \frac{\mu - \lambda}{\mu^{1+n}}, \ \ \lambda = \mu - n -1 < 0, \  \mu < 0.
\end{equation}
We want to show that $M=\C^n$ and that the metric $g$ is complete.

First, by assumption \eqref{condpar} the metric is defined at the origin and $\psi$ is given by (\ref{psiesplicita3}).

Moreover, the function $\psi$ vanishes only for $y=0$: indeed, assume by contradiction that under the assumptions $\nu = n! \frac{\mu - \lambda}{\mu^{1+n}}, \  \ \mu <0, \ \ \lambda < 0$ there exists another positive zero $y=a$ for $\psi$. Then, by (\ref{finale9}) we get $\psi'(a) = n - \lambda a > 0$, which is not possible (if $\psi$ starts positive from zero, then it must be decreasing in a neighbourhood of the positive zero $a$).
It follows that $\psi$ is defined and positive for $y \in (0, +\infty)$.
Now, the implicit definition of the solution $y(t)$, i.e. $\int_{y_0}^{y} \frac{dy}{\psi(y)} = t - t_0$, rewrites
$$ \int_{y_0}^{y} \frac{y^{n-1}}{\nu \left[ e^{\mu y} - \sum_{j=0}^{n} \frac{\mu^j}{j!} y^{j} \right] + y^n} dy = t - t_0$$
and then, since $\mu < 0$, one immediately sees that the integrand goes to zero as $1/y$ for $y \rightarrow + \infty$, so it diverges and $t_{\sup} = + \infty$. 
In terms of $r= e^t$ we get $(r_{\inf}, r_{\sup}) = (0, +\infty)$ and the metric is defined on all $\C^n$.

As for completeness, by \eqref{propcompl} we need to check that the integral
\begin{equation}\label{intcompl}
\int_{y_0}^{+\infty} \sqrt{ \frac{y^{n-1}}{\nu \left[ e^{\mu y} - \sum_{j=0}^{n} \frac{\mu^j}{j!} y^{j} \right] + y^n} } dy
\end{equation}
diverges.
But this is clear since, as already observed above, for $y \rightarrow + \infty$ the function $\frac{1}{\psi}$ goes to zero as $1/y$, so that the integrand in (\ref{intcompl}) goes to zero as $1/\sqrt y$, and then the conclusion follows by $\int_{y_0}^{+\infty}\frac{1}{\sqrt y} dy =  [2\sqrt y]_{y_0}^{+ \infty} = + \infty$.
\end{example}

\begin{rmk} \rm
Let us notice that this soliton is an example of the complete expanding soliton on all of $\C^n$ found by Cao in \cite{CAO} characterized by the values of the parameters (in our notation) $\lambda = -1$, $\mu < 0, \nu = n! \frac{\mu- \lambda}{\mu^{1+n}}$.
\end{rmk}

\begin{proof}[Proof of Theorem \ref{mainteor3}]
Take the complete non trivial  KRS $(\C^n, g_{\mu})$ in Example \ref{filippofamily} and set
\begin{equation}\label{psinew}
\psi(y,\mu):=\psi (y)=y+ \sum_{j=2}^\infty\frac{(n+1)!}{(n+j-1)!}\mu^{j-2}y^j.
\end{equation}
In order to prove the theorem we will show that for a suitable  choice of $\mu$  the \K\ manifold   $(\C^n, g_\mu)$ can be \K\ immersed into $\ell^2(\C)$ (and hence into $\C P^{\infty}$).

By using Weierstrass M-test one sees that 
$\frac{\de^h \psi}{\de y^h}$  are continuous with respect to  $\mu$, for all $h\in\N$ in the interval $[-1, 1]$.
Then, by definition of $Q^0_k$, namely
\begin{equation}\label{Q0new}
Q^0_{k+1}(y,\mu)=\dot {Q^0_k} \psi(y,\mu)-kQ^0_k(y,\mu),
\end{equation} 
every derivative  w.r.t.  $y$ of $Q^0_k(y,\mu)$ is  continuous  w.r.t. $\mu$.

Since $\psi(y,0)=y+y^2$
and 
\begin{equation}\label{Qkcontr}
Q^0_k(y,0)=(k-1)! y^k,
\end{equation}
we deduce that  for every $k\in\N$, there exists a real negative constant $-1\leq\epsilon_k <0$ such that
$$\frac{\de^k Q^0_k}{\de y^k}\Big|_{(0,\mu)}>0, \quad\forall\mu\in (\epsilon_k, 0).$$
Moreover, we claim that    
$$I:=\bigcap_{k\in\N} (\epsilon_k, 0)\neq\emptyset.$$
Indeed, if by contradiction  $\lim_{k\to \infty} \epsilon_k =  0$ then one can easily  find a sequence $\mu_k\in [-1, 0)$ such that 
$\lim_{k\to \infty} \frac{\de^k Q^0_k}{\de y^k}\Big|_{(0,\mu_k )}=0$.
On the other hand,  by \eqref{Qkcontr} we deduce that    $\lim_{k\to \infty} \frac{\de^k Q^0_k}{\de y^k}\Big|_{(0,0 )}= +\infty$, yielding the desired contradiction and proving the claim.
 
 Finally notice that by \eqref{psinew} and \eqref{Q0new} we have  that $Q^0_k(y)=O(y^k)$ independently  of $\mu$ and then 
 by Lemma \ref{calcrit} we deduce that  $g_{\mu}$  with  $\mu\in I$ is induced by $\ell^2(\C)$.

\end{proof}

\section{Projectively induced KRS and the proof of Theorem \ref{mainteor2}}\label{projindKRS}
We start by describing  some necessary conditions for a radial KRS to be projectively induced.

\begin{prop}\label{proph}
Let $M$ be an $n$-dimensional complex manifold with $n\geq 2$
and let  $g$ be the \K\ metric of the radial KRS  corresponding to the function $\psi(y)$, $y\in (y_{\inf}, y_{\sup})$ with solitonic constant $\lambda$.
Let us assume that $g$ is projectively induced.  Set $y_{\inf}:=h$. Then the following facts hold true:
\begin{itemize}
\item [(i)]
$\psi(h) = 0$;
\item [(ii)]
$h\in\N$;
\item [(iii)]
$\dot\psi(h)\in\Z$;
\item [(iv)]
if  $h\neq 0$ then $\lambda\in\Q$.
\end{itemize}
\end{prop}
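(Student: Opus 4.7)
The plan is to combine Calabi's positive-coefficient characterisation of projectively induced metrics with the radial symmetry of $g$ to extract the leading-order behaviour of $f$ near $r=0$. From a \K\ immersion $\varphi\colon M\to(\CP^\infty,g_{FS})$ inducing $g$, one has, up to an absorbable pluriharmonic term, $e^{f(r)}=\sum_j|\tilde\varphi_j(z)|^2$ for holomorphic functions $\tilde\varphi_j$. The radial symmetry of $f$ together with Calabi's diagonalisation of the associated kernel matrix in the monomial basis yields the expansion
$$e^{f(r)}=\sum_{k\geq 0}a_k\,r^k,\qquad a_k\geq 0,$$
convergent on the radial domain. Since $n\geq 2$, Hartogs extension applied to the $\tilde\varphi_j$ promotes this to a real-analytic formula on $[0,r_{\sup})$, and hence forces $r_{\inf}=0$.

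Let $k_0:=\min\{k\colon a_k>0\}\in\N$ and let $j\geq 1$ be the smallest integer with $a_{k_0+j}>0$ (such $j$ exists since otherwise $\psi\equiv 0$ and $g$ would be degenerate). Writing $P(r)=a_{k_0}+a_{k_0+j}r^j+O(r^{j+1})$ with $P(0)>0$, we obtain
$$f(r)=k_0\log r+\log P(r),\qquad y(r)=rf'(r)=k_0+r\frac{P'(r)}{P(r)}.$$
Letting $r\to 0^+$ gives $h=y_{\inf}=k_0\in\N$, proving (ii). Expanding one order further,
$$y(r)-k_0=j\,\frac{a_{k_0+j}}{a_{k_0}}\,r^j+O(r^{j+1}),\qquad \psi(y(r))=r\,y'(r)=j^2\,\frac{a_{k_0+j}}{a_{k_0}}\,r^j+O(r^{j+1}),$$
and dividing these asymptotics yields $\psi(y)=j(y-h)+o(y-h)$ as $y\to h^+$. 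This simultaneously gives (i), $\psi(h)=0$, and (iii), $\dot\psi(h)=j\in\Z$. Finally, for (iv), when $h>0$ we evaluate the ODE \eqref{finale9} at $y=h$ using $\psi(h)=0$: this gives $\dot\psi(h)=n-\lambda h$, whence $\lambda=(n-j)/h\in\Q$.

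The main obstacle is rigorously establishing the positive-coefficient expansion. This combines Calabi's analysis of the Bergman-type kernel matrix of the immersion (which is positive semi-definite of at most countable rank) with the reduction to the monomial basis forced by radial symmetry (so that off-diagonal entries must vanish and the diagonal ones are multinomially proportional to a single sequence $a_k\geq 0$), together with Hartogs' theorem, which is available because $n\geq 2$ and pushes the expansion across $r=0$. Once this expansion is in hand, everything else follows from straightforward leading-order asymptotics.
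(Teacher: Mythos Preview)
Your approach is correct and genuinely different from the paper's. The paper works entirely through the recursion for the functions $Q_k:=Q_k^1$ of Lemma~\ref{calcrit}: it uses Lemma~\ref{lemmalimpsi} together with the blow-up of $\dot\psi$ coming from \eqref{finale9} to force $\psi(h)=0$; then it computes $Q_{k+1}(h)=(h-k)\cdots(h-1)h$ and the derivatives $\dot Q_{h+j}(h)$ explicitly, so that the non-negativity of all $Q_k$ on $(y_{\inf},y_{\sup})$ forces $h\in\N$ and $\dot\psi(h)\in\Z$. You instead manufacture the expansion $e^{f(r)}=\sum_{k\ge0}a_kr^k$ with $a_k\ge0$ on $[0,r_{\sup})$ by lifting $\varphi$ to $\ell^2$, extending each component across the inner boundary via Hartogs, and integrating over spheres; the first two nonzero coefficients then give $h=k_0$ and $\dot\psi(h)=j$ at once. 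Your route is more conceptual, delivers the sharper $\dot\psi(h)\in\Z^+$, and incidentally yields $r_{\inf}=0$; the paper's route stays inside the $Q_k$ machinery already set up and avoids any line-bundle or several-complex-variables input.

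Two of the steps you flag as ``the main obstacle'' do need to be made precise. First, the existence of a global holomorphic lift $\tilde\varphi$ on the shell is not automatic: it amounts to the holomorphic triviality of $\varphi^*\OO(-1)$, which holds because for $n\ge2$ the shell is simply connected and this bundle carries a Hermitian metric with the same curvature $\omega$ as the trivial bundle equipped with $e^{-f}$, so the ratio is flat and hence trivial. Second, ``forces $r_{\inf}=0$'' needs more than the real-analytic extension of $e^f$ to $[0,r_{\sup})$: since $(r_{\inf},r_{\sup})$ is maximal for the KRS metric, you must also check that the extended potential $f=\log G$ still gives $\psi>0$ on $(0,r_{\sup})$. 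Writing $y(r)=\sum_k k\,a_kr^k\big/\sum_k a_kr^k$ exhibits $y$ as the mean of $k$ under the probability weights $a_kr^k/G(r)$, so that $ry'(r)$ equals the variance of that distribution and is therefore strictly positive as soon as at least two of the $a_k$ are nonzero.
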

\begin{proof}
Throughout  the proof we will denote the $Q_k^{+1}(y)$ appearing in  (\ref{Qk}) simply by $Q_k(y)$.
Assume by contradiction that $\psi(h) \neq 0$. Then by Lemma \ref{lemmalimpsi} one has
$h= 0$.
Thus,  by \eqref{finale9} when $n\geq 2$ and the fact that $\psi (y)>0$ we see that 
\begin{equation}\label{psiprimo}
\lim_{y \rightarrow h^+} \dot\psi(y)=\lim_{y \rightarrow 0^+} \dot\psi(y)= - \infty
\end{equation}

By deriving (\ref{Qk}) with  $\epsilon =1$ and  $k=1$, we get

$${\dot Q}_2(y) = 2y-1 +\dot\psi(y)$$

which combined with (\ref{psiprimo}) yields $\lim_{y \rightarrow 0^+}{\dot Q}_2(y)= - \infty$.

By (\ref{Qk}) with  $\epsilon =1$ and  $k=2$
$$Q_{3}(y) = (y-2)Q_2^1(y) + \psi(y) 
{\dot Q}_2(y).$$

We then immediately deduce that $\lim_{y \rightarrow 0^+} Q_3(y) = - \infty$, in contrast with 
Lemma \ref{calcrit}, since the metric is projectively induced, and (i) is proved.

Notice now that
by  (i) 
and  (\ref{Qk}) (with $\epsilon=1$) 
one can prove  by induction on $k\in\Z^+$ that 
\begin{equation}\label{Qkindu}
Q_{k+1}(h) = (h- k)(h - k +1) \cdots ( h-1)h.
\end{equation}

Assume  by contradiction that  $h\notin \Z$. Then by \eqref{Qkindu}  there exists $k$ such that 
$Q_{k+1}(h) <0$ again in contrast with the projectively induced assumption on $g$. Thus (ii) is proved.
 
 By \eqref{Qkindu}  we also deduce that 
 \begin{equation}\label{Qsol}
Q_j(h) = 0, \forall j \geq h+1.
\end{equation}
We claim that
\begin{equation}\label{Qkindu2}
{\dot Q}_{h+j}(h) = (\dot\psi(h) - 1)(\dot\psi(h)  - 2) \cdots (\dot\psi(h)  - j+1) {\dot Q}_{h+1}(h),
\end{equation}
for all  $j\geq 2$.

Indeed, by deriving (\ref{Qk}) we have
\begin{equation}\label{Qkder}
{\dot Q}_{h+2}(y) = Q_{h+1}(y) + (y-h-1){\dot Q}_{h+1}(y) + \psi (y){\ddot Q}_{h+1}(y) + \dot\psi(y){\dot Q}_{h+1}(y)
\end{equation}
and the assertion follows for $j=2$ by letting $y =h$ and using $Q_{h+1}(h)=0$ (by \eqref{Qsol}) and $\psi (h)=0$ (by (i)). 
Assuming that (\ref{Qkindu2}) is true for some $j$,  by deriving (\ref{Qk}) w.r.t. $y$   we get
\begin{equation}\label{Qkder}
{\dot Q}_{h+j+1}(y) = Q_{h+j}(y) + (y-h-j){\dot Q}_{h+j}(y) + \psi (y){\ddot Q}_{h+j}(y)+ \dot\psi(y){\dot Q}_{h+j}(y)
\end{equation}
By taking  $y=h$ we see that  
$${\dot Q}_{h+j+1}(h) = ( \dot\psi(h)-j){\dot Q}_{h+j}(h),$$
which, together with the inductive assumption, proves our claim.

By (\ref{Qk}) and its derivative (with $k=j$) with respect to $y$ and taking into account (i), i.e.  
$\psi(h)=0$, on easily gets
$$Q_j(h) = Q_{j-1}(h)(h-j+1)$$
$${\dot Q}_j(h) = {\dot Q}_{j-1}(h)(h-j+1) + Q_{j-1}(h) + \dot\psi(h){\dot Q}_{j-1}(h).$$
Combining these two equalities and using $Q_1(y) = y$  we find
$$(Q_j + \dot\psi {\dot Q}_j)(h) = (h + \dot\psi (h))(h-1+\dot\psi (h)) \cdots (h-j+1+\dot\psi (h)).$$
Taking $j=h+1$ and using $Q_{h+1}(h)=0$  we get
$${\dot Q}_{h+1}(h) = (h + \dot\psi (h))(h-1+\dot\psi (h)) \cdots (1+\dot\psi (h)).$$
Now if  by contradiction (iii) is false, i.e. $\dot\psi(h)\notin\Z$, we  get ${\dot Q}_{h+1}(h)\neq 0$.
Thus by \eqref{Qkindu2}
we deduce $\dot Q_{h+j}(h)<0$ for some $j$, which combined with 
\eqref{Qsol} implies  that $Q_{h+j}(y)<0$ on a right neighborhood of $h$, in contrast with fact that 
$g$ is projectively induced.

By combining (i) and  (\ref{finale9}) (here we are using the assumption that $n\geq 2$)
 one deduces that $\dot\psi (h)= n - \lambda h$ and hence (iv) readily follows by (ii) and  (iii).
\end{proof}

Before proving Theorem \ref{mainteor2} we recall the definition of $c$-stable projectively induced metric.

\begin{defin}\label{defst}
Let $c>0$.
 A \K\  metric $g$  is said to be  {\em $c$-stable projectively induced} if there exists $\epsilon >0$ such that $\alpha g$ is induced  by $(\C P^\infty, g_c^\infty)$
for all $\alpha \in (1-\epsilon, 1+\epsilon)$.   A \K\  metric $g$  is said to be unstable if it is not $c$-stable projectively 
induced for any $c>0$. When $c=1$ we simply say that $g$ is {\em stable-projectively induced}.
\end{defin}

\begin{rmk}\rm\label{rmkstable}
Notice  that a  \K\ metric which can be \K\ immersed into a non-elliptic (finite or infinite dimensional) complex space form is authomatically $c$-stable projectivelly induced  (the reader is referred to \cite{LSZ} for details). 
\end{rmk}

\begin{proof}[Proof of Theorem \ref{mainteor2}]
Without loss of generality we can assume that $g$ is induced by $(\CP^{\infty}, g_{FS})$
and hence $g$ is stable projectively induced.  Therefore,  by multiplying the metric 
by a suitable  positive constant $\beta$, we can assume that   $\beta g$ is still projectively induced and  with  solitonic constant 
$\frac{\lambda}{\beta}\in \R\setminus \Q$.
Hence  by (i) and (iv) of  Proposition \ref{proph} $y_{\inf}=0$ and $\psi (y_{\inf})=0$.
Thus, as  seen in the last part of the proof of Proposition \ref{mainprop}, $f(r)$ is defined at $r_{\inf}=0$. 

\end{proof}

\appendix
\section{The proofs of Lemma \ref{lemradial} and Lemma \ref{lemradial2} }\label{proofslemma}

\begin{proof}[Proof of Lemma \ref{lemradial}]
If we derivate the equality $G(z)=\Phi(z) + \bar \Phi(z)$ with respect to $z_j$ and $\bar z_k$ we obtain

\begin{equation}\label{Gjk}
0= \frac{\partial^2 G}{\partial x_j \partial x_k} z_k \bar z_j + \frac{\partial G}{\partial x_j} \delta_{jk}
\end{equation}

Now,  if $n \geq 2$ we can take $k \neq j$ in this equation and deduce $\frac{\partial^2 G}{\partial x_j \partial x_k} =0$, so that $\frac{\partial G}{\partial x_j } = \Gamma_{j}(x_j)$, for some smooth function $\Gamma_j(x_j)$, $j=1, \dots n$. This combined with (\ref{Gjk}) for $k=j$ yields

\begin{equation}\label{Gjk2}
0= \Gamma_{j}'(x_j) x_j + \Gamma_{j}(x_j), \forall j=1, \dots n,
\end{equation}

i.e.,

\begin{equation}\label{Gjk3}
(\Gamma_{j}(x_j) x_j)' =0, \ \forall j=1, \dots n, 
\end{equation}

and then

$$\Gamma_{j}(x_j) = \frac{\partial G}{\partial x_j } = \frac{c_{j}}{x_j},  \ \forall j=1, \dots n,$$

from which  \eqref{forlemmaradial} follows immediately.

If $n = 1$, equation (\ref{Gjk}) writes $G''(x) x + G'(x) = 0$, which can be treated as equation (\ref{Gjk2}) to deduce that $G'(x) = \frac{c}{x}$ (for  some constant $c$) and obtain the same conclusion.

The last assertion in the statement follows immediately by noticing that \eqref{forlemmaradial}  with 
$G$ rotation invariant can be satisfied only if the $c_j$'s vanish or $n=1$.  
\end{proof}

\begin{proof}[Proof of Lemma \ref{lemradial2}]
By deriving \eqref{forlemma2} with respect to $z_l$ and $\bar z_l$ we get

\begin{equation}\label{Yz}
\frac{\partial Y_l}{\partial z_l} + \frac{\partial \bar Y_l}{\partial \bar z_l} = \phi''(r) |z_l|^2 + \phi'(r), \ l=1, \dots n.
\end{equation}

Since the right-hand side is a rotation invariant function and $\frac{\partial Y_l}{\partial z_l}$ is holomorphic, we can apply Lemma \ref{lemradial} to deduce that 
$$\frac{\partial Y_l}{\partial z_l} = \sum_{j=1}^n c_{lj} \log z_j + d_l, \ l=1, \dots ,n$$ 
for some $c_{lj}, d_l \in \C$.
Then \eqref{Yz}  writes

$$
\sum_{j=1}^n (c_{lj} \log z_j + \bar c_{lj} \log \bar z_j)  + 2 \tilde d_l  = \phi''(r) |z_l|^2 + \phi'(r), \ l=1, \dots ,n,
$$

where $2\tilde d_l = d_l + \bar d_l$.

Then  we deduce that $c_{lj} \in \R$, and 

$$
\sum_{j=1}^n c_{lj} \log |z_j|^2 + 2 \tilde d_l  = \phi''(r) |z_l|^2 + \phi'(r), \ l=1, \dots ,n,
$$

By setting $x_j = |z_j|^2$, we can rewrite this equation as

$$
\frac{\partial}{\partial x_l} (\phi'(r) x_l) = \sum_{j=1}^n c_{lj} \log x_j + 2 \tilde d_l,  \ l=1, \dots ,n,
$$

and, by integrating, we get

\begin{equation}\label{equationLemma25}
\phi' (r)x_l = x_l \sum_{j=1}^n c_{lj} \log x_j + 2 \tilde d_l x_l + F_l(x),  \ l=1, \dots ,n
\end{equation}

where $F_l(x)$ does not depend on $x_l$.

If $n \geq 2$ by derivating (\ref{equationLemma25}) with respect to $x_k$, $k \neq l$, we get

$$
\left( \phi''(r) -  \frac{c_{lk}}{x_k} \right) x_l = \frac{\partial F_l}{\partial x_k}
$$

which, since $\frac{\partial F_l}{\partial x_k}$ does not depend on $x_l$, implies  that $F_l$ is a constant, say $f_l$, and $ \phi''(r) =  \frac{c_{lk}}{x_k}$. But, since $\phi'' = \phi''(x_1 + \cdots + x_n)$, this last equality implies that $c_{lk}=0$  for $k \neq l$. Then (\ref{equationLemma25}) becomes

\begin{equation}\label{equationLemma25rew}
\phi' (r)x_l = c_{ll} x_l \log x_l + 2 \tilde d_l x_l + f_l
\end{equation}

Since $\phi' = \phi'(x_1 + \cdots + x_n)$, this equality implies that $\phi'$ is a constant, and then that $\phi(r) = \alpha r$ as desired.

For $n=1$, we have the analogous of  (\ref{equationLemma25rew}) with $\phi'$ depending on one variable $x$ only, that is 

$$
x \phi'(x)= c x \log x + 2 \tilde d x + f,
$$

By derivating and setting $2\alpha=c+2\tilde d$

$$
\phi' (x)+ x \phi''(x)= c \log x + 2\alpha
$$

and then by (\ref{Yz})

$$
\frac{\partial Y}{\partial z} + \frac{\partial \bar Y}{\partial \bar z} =  c \log |z|^2 + 2\alpha
$$

which by the holomorphicity of $\frac{\partial Y}{\partial z}$ implies

$$
\frac{\partial Y}{\partial z} =  c \log z+ \alpha
$$

Then $Y = c \int \log z +  \alpha z + k$. Combined with the assumption $\bar z Y + z \bar Y = \phi(r)$ this yields
$$c \bar z \int \log z + \bar c z \int \log \bar z + \alpha  |z|^2 + (k \bar z + \bar k z) = \phi(r)$$
and this can hold true only if $c = k = 0$, so that $\phi(r) =\alpha r$, as desired. 
\end{proof}

\end{document}